\numberwithin{equation}{section}
\def\R{\mathbb R}
\newcommand{\dist}{\mathop{\mathrm{dist}}}
\def\dist{{\rm dist}}
\def\00{{\bf 0}}
\newcommand{\diver}{{\rm div}}
\newcommand{\ric}{{\rm Ric}}
\newtheorem*{theorem*}{Theorem}
\newtheorem{theorem}{Theorem}[section]
\newtheorem{lemma}[theorem]{Lemma}
\newtheorem{proposition}[theorem]{Proposition}
\newtheorem*{proposition*}{Proposition \ref{prop_stime_cgen}}
\newtheorem{remark}[theorem]{Remark}
\begin{document}
  
    \title[]{Non-existence of patterns for a class of weighted degenerate operators}
    
  \date{}

\author{Giulio Ciraolo}
\address{G. Ciraolo. Dipartimento di Matematica "Federigo Enriques",
Universit\`a degli Studi di Milano, Via Cesare Saldini 50, 20133 Milano, Italy}
\email{giulio.ciraolo@unimi.it}

\author{Rosario Corso}
\address{R. Corso. Dipartimento di Matematica e Informatica,
	Universit\`a degli Studi di Palermo, Via Archirafi 34, 90123 Palermo, Italy
}
\email{rosario.corso02@unipa.it}

\author{Alberto Roncoroni} 
\address{A. Roncoroni.  Dipartimento di Matematica, Politecnico di Milano, Piazza Leonardo da Vinci 32, 20133, Milano, Italy}
\email{alberto.roncoroni@polimi.it}

    \keywords{}
    \subjclass{} 

\begin{abstract} 
A classical result by Casten-Holland and Matano asserts that constants are the only positive and stable solutions to semilinear elliptic PDEs subject to homogeneous Neumann boundary condition in bounded convex domains. In other terms, this result asserts that \emph{stable patterns} do not exist in convex domains.

In this paper we consider a weighted version of the Laplace operator, where the weight may be singular or degenerate at the origin, and prove the nonexistence of patterns, extending the results by Casten-Holland and Matano to general weak solutions (not necessarily stable) and under a suitable assumption on the nonlinearity and the domain. 

Our results exhibit some intriguing behaviour of the problem according to the weight and the geometry of the domain. Indeed, our main results follow from a geometric assumption on the second fundamental form of the boundary in terms of a parameter which describes the degeneracy of the operator. As a consequence, we provide some examples and show that nonexistence of patterns may occurs also for non convex domains whenever the weight is degenerate.
\end{abstract}

\maketitle

\section{Introduction}
Let $\Omega \subset \mathbb{R}^d$, $d \geq 2$, be a bounded domain and consider the problem
\begin{equation} \label{eq_Laplace}
\begin{cases}
\Delta u + f(u) = 0 & \text{ in } \Omega \,, \\
u_\nu = 0 & \text{ on } \partial \Omega \,,
\end{cases}
\end{equation}
where $\nu$ is the outward normal to $\Omega$.

A classical result of Casten-Holland \cite{CastenHolland} and Matano \cite{Matano} states that all stable solutions of \eqref{eq_Laplace} are constant in bounded convex domains. In other words, by calling \emph{pattern} a non-constant solution of \eqref{eq_Laplace}, this result asserts that  stable patterns do not exist in convex domains. Apart from its own mathematical interest, this result has relevant consequences in the classification of solutions, in the study of asymptotics of the associated evolution problems and it is motivated by applications in chemistry, population dynamics, and many others (see \cite[Section 3]{Nordmann} for an interesting and detailed discussion). 

This result has also been extended in several directions, by considering nonlinear elliptic operators and other boundary conditions, on manifolds, unbounded or more general domains and also to some type of systems; we refer to \cite{BMMP, BPT, CCR, DPV, DPV2, Jimbo, JM, Nordmann,Nordmann2,Yanagida} and references therein.

In \cite{CCR} we established a result in the spirit of Casten-Holland and Matano by considering a general weak solution of \eqref{eq_Laplace}. More precisely, we removed the stability assumption on the solution and we proved that constants are the only weak solutions to \eqref{eq_Laplace} under the assumption that the nonlinearity satisfies the following condition 
\begin{equation} \label{f_condition}
\frac{f(u)}{u^{\frac{d+2}{d-2}}} \text{ is non-increasing} \,.
\end{equation}
This condition is in some sense optimal since one can construct counterexamples by adding a small linear perturbation to $f$ (see the discussion in \cite[Introduction]{CCR}); this problem is related to the Lin-Ni conjecture \cite{Lin-Ni}.
The results in \cite{CCR} hold for more general operators, in particular we considered nonlinear anisotropic $p$-Laplace type operators, and also more general boundary conditions.

It is also interesting to notice that the technique used in \cite{CCR} can be seen as a spin-off of the one used in \cite{CFR} where, by exploiting suitable integral identities, critical points of Sobolev inequality 
\begin{equation*}
\left (\int_{\R^d} u^{p^*} dx\right)^{\frac{1}{p^*}} \leq C \left (\int_{\R^d} |Du|^p dx\right )^{\frac{1}{p}}
\end{equation*}
were classified. In this setting, the problem boils down to the classification of solutions to critical $p$-Laplace type equations\footnote{By a critical $p$-Laplace type equation we mean an equation of the form $\Delta_p u + u^{p^*-1}=0$, where $p^*$ is the Sobolev exponent related to $p$. We recall that, in the case $p=2$, we have that $2^*-1 = (d+2)/(d-2)$.} and positive solutions are proved to be radially symmetric (up to a translation). In \cite{CCR}, the classification result naturally follows once such integral identities are carefully adapted to the case of a convex bounded domain and if \eqref{f_condition} is in force.

The main goal of this paper is to continue this line of research in the direction of weighted operators. In particular we consider weak solutions to 
\begin{equation} \label{pb_general_intro}
 	\begin{cases}
 	\diver(|x|^{-2a} D u) + |x|^{-bq}f(u) = 0& \text{ in }\Omega \,,\\
 	u_\nu = 0 & \text{ on } \partial \Omega \,,
 	\end{cases}
 	\end{equation}	
where $\Omega \subset \mathbb{R}^d$ is a bounded domain, $d\geq 3$, and $a,b,q \in \mathbb{R}$ are parameters satisfying certain conditions to be specified later. We emphasize that when the origin $O \in \Omega$, the problem has some relevant regularity issues to be carefully considered. It is clear that \eqref{pb_general_intro} reduces to \eqref{eq_Laplace} when $a=b=0$.

This type of weighted equations arise as the Euler-Lagrange equations of Caffarelli-Kohn-Nirenberg (CKN) inequalities 
\begin{equation}\label{CKN}
\left (\int_{\R^d} |x|^{-bq}u^q dx\right)^{\frac{1}{q}} \leq C_{a,b} \left (\int_{\R^d} |x|^{-2a}|Du|^2 dx\right )^{\frac{1}{2}}
\end{equation}
(see \cite{CKN}), where 
\begin{equation} \label{parameters} 
a\leq b <  a+1\,, \ q=\frac{2d}{d-2(1+a-b)}\,, \ a < a_c\,,
\end{equation}
with
\begin{equation} \label{ac_def}
a_c=\frac{d}{2}-1\,,
\end{equation}
and, for future reference, we set
\begin{equation}\label{alpha_def}
\alpha = \frac{(1+a-b)(a_c-a)}{a_c-a+b}
\end{equation}
and 
\begin{equation} \label{n_def}
n=\frac{d}{1+a-b} 
\end{equation}
(notice that $n \geq d$).

CKN inequalities can be seen as interpolation inequalities between Sobolev and Hardy inequalities and they exhibit a very interesting feature about the symmetry of extremals. Indeed, as it was proved in \cite{DEL} and \cite{FS}, extremals of \eqref{CKN} are radially symmetric if and only if 
\begin{equation} \label{alpha_optimal}
\alpha \leq \sqrt{\dfrac{{d-1}}{n-1}} \,.
\end{equation}

Hence, in analogy to what done in \cite{CCR} starting from the argument in \cite{CFR} for critical points of Sobolev inequality, it is natural to investigate whether the symmetry result proved in \cite{DEL} leads to a classification result for \eqref{pb_general_intro} in bounded convex domains. Actually, we started considering equations like \eqref{pb_general_intro} because we were interested in studying how the geometry of the domain influences the geometry of solutions. Indeed, since the operator in \eqref{pb_general_intro} depends on the space variable $x$ and may be degenerate or singular at the origin due to the space variable, it is not clear if the convexity of the domain is the natural assumption to consider in order to have a classification result in the spirit of the results by Casten-Holland and Matano. Moreover we notice that, even in the case $\Omega=B_R$ is a ball of radius $R$ centered at the origin, it is not clear if solutions are radial for any range of the parameters $a$ and $b$ and then the study of radial solutions to \eqref{pb_general_intro} may be of limited interest.

Our main results go in two directions. We first give a classification result for \eqref{pb_general_intro} if $\Omega=B_R$; this result is likely optimal in some sense that we are going to describe later. Then we prove a classification result for a more general $\Omega$; this result will exhibit an interesting feature by showing that the notion of convexity has to be suitably modified in order to get the classification result.

We start by describing our first result, i.e. we consider \eqref{pb_general_intro} with $\Omega = B_R$.

 \begin{theorem}\label{teo2}
 	Let $B_R\subset \mathbb{R}^d$, with $d \geq 3$. Let $f \in C^1([0,+\infty))$ satisfy 
\begin{equation} \label{Phi_def} 
 	\Phi(t):=\frac{f(t)}{t^{\frac{n+2}{n-2}}} \quad \textmd{ is non-increasing, }
 \end{equation}
where $n$ is given by \eqref{n_def}. 
	
	If \eqref{alpha_optimal} holds then there exist no positive bounded weak solutions $u$ to
 	\begin{equation}\label{pb_palla}
 	\begin{cases}
 	\diver(|x|^{-2a} D u) + |x|^{-bq}f(u) = 0& \text{ in }B_R \,,\\
 	u_\nu = 0 & \text{ on } \partial B_R \,,
 	\end{cases}
 	\end{equation}
 	unless $u$ is constant.
 \end{theorem}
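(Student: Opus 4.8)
The plan is to transform the weighted problem \eqref{pb_palla} on $B_R$ into an unweighted Neumann problem on a ball in a "fake" dimension $n = d/(1+a-b)$, and then apply a Pohozaev–type / integral-identity argument in the spirit of \cite{CCR, CFR}. Concretely, I would introduce the change of variables suggested by the Emden–Fowler transformation attached to CKN inequalities: writing $x = r\theta$ with $r = |x|$ and $\theta \in \mathbb{S}^{d-1}$, one rescales the radial variable via $s = r^{1+a-b}$ (up to a multiplicative constant), so that the operator $\diver(|x|^{-2a}Du)$ becomes, after multiplying by an explicit power of $|x|$, a Laplace–Beltrami–type operator on the cylinder $\mathbb{R}_+ \times \mathbb{S}^{d-1}$ carrying a metric whose "dimension" is $n$ rather than $d$. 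Under the constraint $q = 2n/(n-2)$ forced by \eqref{parameters} and \eqref{n_def}, the exponent $(n+2)/(n-2)$ appearing in \eqref{Phi_def} is precisely the critical Sobolev exponent for this effective dimension, so the monotonicity hypothesis on $\Phi$ is the natural analogue of \eqref{f_condition}. The Neumann condition $u_\nu = 0$ on $\partial B_R$ is preserved by the transformation because the radial rescaling maps $\partial B_R$ to a coordinate sphere, whose outer normal is still radial.

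The core of the argument is then the integral identity. Following \cite{CCR}, I would test the equation against a suitable vector field (the weighted analogue of a conformal Killing field, whose geometry is dictated by the value of $\alpha$ in \eqref{alpha_def}) and integrate by parts. This produces an identity of the schematic form
\begin{equation*}
\int_{B_R} |x|^{-2a}\Big( \text{(traceless Hessian term)}^2 + \text{(Bakry–Émery Ricci term)}\,|Du|^2 \Big)\,dx
\;+\; \int_{\partial B_R} (\text{boundary term}) \;=\; \int_{B_R} |x|^{-bq} G(u)\,dx,
\end{equation*}
where the bulk Ricci-type term carries exactly the curvature dimension condition that becomes nonnegative precisely under \eqref{alpha_optimal} — this is the point at which the sharp symmetry threshold of \cite{DEL, FS} enters — and the right-hand side $G(u)$ has a sign determined by the monotonicity of $\Phi$. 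The boundary term must be shown to be nonpositive: on $\partial B_R$, a sphere centered at the origin, the relevant second fundamental form is that of a sphere, which has the right sign, and here the fact that $\Omega$ is exactly a ball centered at $O$ (rather than a general convex domain) is what makes the boundary contribution manageable despite the weight's singularity at the center.

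The main obstacle, and where I would spend most of the care, is the regularity and integrability near the origin when $O \in B_R$: the weight $|x|^{-2a}$ may be singular or degenerate there, so one must justify that a positive bounded weak solution is smooth enough away from $O$ and has enough decay/control of $|Du|$ and $D^2u$ near $O$ for all the integrations by parts — in particular the cutoff argument removing a small ball $B_\varepsilon$ and the limit $\varepsilon \to 0$ — to be legitimate. This requires careful elliptic estimates for the degenerate operator (Harnack, local boundedness of the gradient in weighted spaces, Caccioppoli inequalities) and an a priori bound showing the error terms on $\partial B_\varepsilon$ vanish; the constraint $a < a_c$ from \eqref{parameters} is presumably what guarantees the weight is in the right Muckenhoupt class for this to work. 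Once the identity holds with the error terms gone, the conclusion is immediate: both the bulk traceless-Hessian term and the right-hand side, together with the sign of the Ricci term under \eqref{alpha_optimal} and of the boundary term, force $D^2 u$ to be a multiple of the metric and ultimately $Du \equiv 0$, so $u$ is constant.
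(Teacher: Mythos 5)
Your overall skeleton matches the paper's strategy: a change of variables turning the weighted operator into a Laplace--Beltrami-type operator of effective dimension $n$ (the paper uses $T(x)=|x|^{\alpha-1}x$ with the metric \eqref{g_def}, rather than your $s=r^{1+a-b}$, but the spirit is the same), followed by a Bochner/Pohozaev-type integral identity whose bulk term involves $|H_v|^2-\tfrac1n(Lv)^2$ plus a Bakry--\'Emery--Ricci contribution, a boundary term controlled by the second fundamental form, and a right-hand side signed by the monotonicity of $\Phi$; the regularity issues at the origin are indeed handled by a cutoff on $B_\varepsilon$ together with asymptotic estimates in the Emden--Fowler variable. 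Remark \ref{remark_f0} and Proposition \ref{lemma_integr3} confirm this architecture.

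However, there is a genuine gap at the decisive step, namely your claim that the curvature term ``becomes nonnegative precisely under \eqref{alpha_optimal}.'' A pointwise argument --- Cauchy--Schwarz $|H_v|^2\geq\tfrac1d(\Delta_g v)^2$ combined with the explicit expressions \eqref{ric_g}--\eqref{H_g} --- yields $\textsf{k}[v]\geq 0$ only under the strictly stronger condition \eqref{hp_alfa}, i.e. $\alpha<\sqrt{(d-2)/(n-2)}$; this is exactly why the paper's Theorem \ref{teo1} for general domains has the reduced range. To reach the sharp threshold $\alpha\leq\sqrt{(d-1)/(n-1)}$ one must abandon pointwise positivity and instead prove the \emph{integral} inequality
\begin{equation*}
\int_{B_{\tilde R}}|x|^{n-d}v^{1-n}\textsf{k}[v]\,dx\;\geq\;(n-2)\Big(\tfrac{d-1}{n-1}-\alpha^2\Big)\int_{B_{\tilde R}}|x|^{n-d}v^{1-n}\tfrac{1}{r^4}|\nabla_\omega v|^2\,dx+(\text{squares})\,,
\end{equation*}
which is obtained, as in \cite[Lemmas 5.1, 5.2]{DEL}, by decomposing the Hessian into radial and angular parts and applying the Poincar\'e inequality on $\mathbb{S}^{d-1}$ (first nonzero eigenvalue $d-1$) on each sphere $\partial B_r$. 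This is where the hypothesis $\Omega=B_R$ centered at the origin is actually used: the spherical integrations by parts require entire coordinate spheres contained in the domain. You instead attribute the role of the ball to the sign of the boundary term, but that term is already nonpositive for any domain that is convex in the metric $g$; misplacing this leaves the passage from $\sqrt{(d-2)/(n-2)}$ to $\sqrt{(d-1)/(n-1)}$ unjustified. Finally, note that the endgame is slightly more delicate than ``$Du\equiv 0$'': vanishing of the quadratic terms only forces $v$ to be radial with $v''-v'/r=0$, i.e. $v=c+r^2/(\alpha^2\lambda)$ or constant, and the non-constant profile must be excluded using the Neumann condition on $\partial B_{\tilde R}$.
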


We emphasize that the parameter $n$ is (in general) greater than the dimension $d$ and it acts as a new fictitious dimension. It is clear that \eqref{Phi_def} is the same as \eqref{f_condition} in the case $a=b=0$, since in this case $n=d$. Hence, as it was showed in \cite{CCR}, \eqref{Phi_def} can be considered optimal in some sense. Indeed, one can add a small linear perturbation to $f$ and prove that Theorem \ref{teo2} fails, which can be done by using several results on Lin-Ni conjecture (see \cite{Wang_TAMS,WangII,Wei,CCR} and references therein).

Regarding the optimality of the range of the parameters, we recall that condition \eqref{alpha_optimal} gives the optimal region of symmetry for minimizers of CKN inequalities \cite{DEL} (see also \cite{DGZ}). Our approach in Theorem \ref{teo2} deeply use the results in \cite{DEL} and for this reason we conjecture that \eqref{teo2} is optimal. 

When $\Omega$ is not a ball, we have to argue in a different way and the assumption \eqref{alpha_optimal} is too weak to conclude, since many tools that we use in Theorem \ref{teo2} work only in a radial setting. In this case, we have to follow a different strategy and this has the cost of reducing the range of $\alpha$. Moreover, the convexity of $\Omega$ does not seem to be a suitable assumption to conclude and it must be strenghtened as we are going to explain below.

 \begin{theorem}\label{teo1}
Let $\alpha$ satisfy 
\begin{equation}\label{hp_alfa}
 	\alpha < \sqrt{\dfrac{{d-2}}{n-2}},
 	\end{equation}
and let $\Omega\subset \mathbb{R}^d$, $d \geq 3$, be a bounded domain satisfying
\begin{equation} \label{condTeo1} 
\mathrm{II}_{\partial \Omega}\geq (1-\alpha) \frac{x \cdot \nu}{|x|^2}  \,,
\end{equation}
where $\mathrm{II}_{\partial \Omega}$ and $\nu$ denote the second fundamental form of $\partial \Omega$ and the unit outward normal to $\Omega$ at a point $x \in\partial \Omega$.

Let $f \in C^1(\mathbb R)$ satisfy \eqref{Phi_def}.
 	 Then there exist no positive bounded weak solutions $u$ to
 	\begin{equation*} 
 	\begin{cases}
 	\diver(|x|^{-2a} D u) + |x|^{-bq}f(u) = 0& \text{ in }\Omega \,,\\
 	u_\nu = 0 & \text{ on } \partial \Omega \,,
 	\end{cases}
 	\end{equation*} 
 	unless $u$ is constant.
 \end{theorem}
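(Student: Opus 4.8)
The plan is to adapt the integral-identity (P-function / Pohozaev-type) technique of \cite{CCR} and \cite{CFR} to the weighted setting, using the "fictitious dimension" $n$ from \eqref{n_def} to turn the degenerate operator $\diver(|x|^{-2a}Du)$ into something that behaves like a Laplace–Beltrami operator on an $n$-dimensional model. Concretely, I would first introduce the change of variables (or equivalently the weighted measure) that makes \eqref{pb_general_intro} the Euler–Lagrange equation associated with the CKN functional; in the radial variable $s=|x|^{1+a-b}$ the weight $|x|^{-2a}\,dx$ and the weight $|x|^{-bq}\,dx$ become, up to constants, the volume element of a cone of dimension $n$, so that the equation reads $\Delta_{(n)} v + f(v)=0$ with $\Delta_{(n)}$ a weighted Laplacian whose drift has the right sign. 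The parameter $\alpha$ in \eqref{alpha_def} measures precisely the anisotropy between the angular and radial scalings, and condition \eqref{hp_alfa}, $\alpha<\sqrt{(d-2)/(n-2)}$, is exactly what is needed for the quadratic form appearing in the integrated Bochner identity to be nonnegative (this is the place where the \cite{DEL} threshold $\sqrt{(d-1)/(n-1)}$ is replaced by the strictly smaller $\sqrt{(d-2)/(n-2)}$, the loss being the price of working in a general domain rather than on a ball).

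The core of the argument is then a Rellich–Pohozaev / Bochner computation. Let $u$ be a positive bounded weak solution; after a regularity discussion near the origin (using that $a<a_c$ so that $|x|^{-2a}\in L^1_{loc}$ and elliptic estimates away from $O$, plus a capacity argument to handle the singular point — I expect $u$ to be $C^1$ away from $O$ and bounded across it, with the weighted normal derivative controlled) one tests the equation against a vector field of the form $|x|^{-2a}\langle \nabla P, \nabla u\rangle$, where $P$ is a suitable "P-function", e.g. a combination of $|\nabla u|^2$ (measured in the weighted metric) and a primitive $F$ of $f$. Integrating by parts produces a bulk term that is a full square — the Cauchy–Schwarz defect $|\nabla^2 u|^2 - (\Delta u)^2/n$ in the model geometry, which is $\geq 0$ precisely because of the dimension $n$ — plus a curvature term of Bochner type whose sign is governed by the Ricci curvature of the model cone, and a boundary term. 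Under the Neumann condition $u_\nu=0$ the boundary term reduces to an integral over $\partial\Omega$ of the second fundamental form $\mathrm{II}_{\partial\Omega}$ contracted with the tangential gradient of $u$, together with a term coming from the weight, which is exactly $(1-\alpha)\frac{x\cdot\nu}{|x|^2}|\nabla_\tau u|^2$; hypothesis \eqref{condTeo1} makes this boundary term have the favourable sign. Combining everything, one concludes that all the nonnegative terms must vanish, which forces $\nabla^2 u\equiv$ (scalar) and $f(u)$ compatible with $\Phi$ non-increasing only if $u$ is constant.

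More precisely, once the integrated identity is shown to be a sum of terms each of which is $\leq 0$ (or $\geq 0$, depending on the sign convention), equality forces: (i) the Cauchy–Schwarz term vanishes, so $u$ has "umbilic" Hessian in the model metric; (ii) the curvature/drift term vanishes; (iii) on $\partial\Omega$ either $\nabla_\tau u=0$ or $\mathrm{II}_{\partial\Omega}=(1-\alpha)\frac{x\cdot\nu}{|x|^2}$. Then I would run the usual rigidity argument: from (i) plus the equation, $v=v(s)$ depends only on the weighted radial variable, so $\Omega$-dependence and the Neumann condition on the lateral boundary force $v'\equiv 0$ unless the level sets of $u$ foliate $\Omega$ by pieces of weighted spheres, which is incompatible with a generic domain satisfying \eqref{condTeo1} unless $u$ is constant; the monotonicity condition \eqref{Phi_def} on $\Phi$ is what rules out the remaining non-constant radial possibility (this is the exact analogue of how \eqref{f_condition} is used in \cite{CCR}).

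The step I expect to be the main obstacle is twofold: first, the \textbf{regularity and integrability analysis at the origin} — justifying that all the integrations by parts are legitimate when $O\in\Omega$ and the weight is singular/degenerate there (one needs the boundary contributions on small spheres $\partial B_\rho(O)$ to vanish as $\rho\to 0$, which requires sharp local estimates on $u$ and on the weighted gradient, and is genuinely delicate in the degenerate range $a<0$); and second, \textbf{identifying the boundary term correctly} and matching the constant $(1-\alpha)$ — tracking through the change of variables how the Euclidean second fundamental form of $\partial\Omega$ and the term $\frac{x\cdot\nu}{|x|^2}$ combine, since the "model" metric is conformal to the Euclidean one with a singular conformal factor and the second fundamental form transforms with a correction involving the normal derivative of the conformal factor, i.e. exactly $\frac{x\cdot\nu}{|x|^2}$. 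Getting the sharp constant there is what pins down the geometric hypothesis \eqref{condTeo1}.
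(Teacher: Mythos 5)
Your strategy coincides with the paper's actual proof: a change of variables to a Riemannian/conical setting with fictitious dimension $n$, the substitution $v=w^{-2/(n-2)}$ and a weighted Bochner-type integral identity whose bulk term $\textsf{k}[v]$ is shown to be pointwise nonnegative exactly under \eqref{hp_alfa}, whose boundary term is controlled by the conformal transformation law of the second fundamental form (which is precisely where the constant $(1-\alpha)\frac{x\cdot\nu}{|x|^2}$ in \eqref{condTeo1} arises), and whose justification near the origin rests on Emden--Fowler asymptotics in the spirit of \cite{DEL}. The only caveat is that the defect $|H_v|^2-\frac1n(Lv)^2$ is not automatically a nonnegative square in the $d$-dimensional geometry (the paper uses the $d$-dimensional Cauchy--Schwarz inequality and absorbs the drift cross-terms using the curvature terms and \eqref{hp_alfa}), but you correctly attribute the nonnegativity of the full quadratic form to \eqref{hp_alfa}, so this is an imprecision of exposition rather than a gap.
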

 
We notice that, if $\Omega$ is a ball centered at the origin then it satisfies \eqref{condTeo1} (see Section \ref{sect_remarks}). Hence, in this case, Theorem \ref{teo2} improves Theorem \ref{teo1} since a larger range of $\alpha$ can be considered. On the other hand, in Section \ref{sect_remarks} we show that \eqref{condTeo1} is not fulfilled by any ball in $R^d$. In particular, if $\Omega=B_R(x_0)$ is a ball of radius $R$ centered at $x_0 \neq O$, then \eqref{condTeo1} is not satisfied if $O\in B_R(x_0)$ and $\alpha R \leq |x_0|$.  This example motivates the following proposition which give some insights on condition \ref{condTeo1}.

\begin{proposition} \label{propC}
Let $0 < \alpha <1$ and let $\Omega\subset \mathbb{R}^d$, $d \geq 3$, be a bounded domain satisfying \eqref{condTeo1}. If $O \in \Omega$ then $\Omega$ is convex. If instead $O \in \mathbb{R}^d \setminus \overline \Omega$ then there exist both convex and nonconvex domains satisfying \eqref{condTeo1}.
\end{proposition}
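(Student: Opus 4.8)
The plan is to treat the two statements separately; the first one is the substantive part, and I would reduce it to the claim
\begin{equation}\label{eq-propC-star}
 x\cdot\nu\ge 0\qquad\text{on }\partial\Omega .
\end{equation}
Indeed, since $0<\alpha<1$ and $|x|>0$ on $\partial\Omega$ (because $O\in\Omega$), the right-hand side of \eqref{condTeo1} is $\ge 0$ wherever $x\cdot\nu\ge 0$; hence once \eqref{eq-propC-star} is known, \eqref{condTeo1} forces $\mathrm{II}_{\partial\Omega}\ge 0$ everywhere, and a bounded domain whose compact boundary is nonnegatively curved is convex (Hadamard-type theorem; its boundary is then automatically connected and diffeomorphic to a sphere). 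To prove \eqref{eq-propC-star} I would first reduce to $d=2$ by slicing $\Omega$ with a $2$-plane $H$ through $O$, transversal to $\partial\Omega$: by Meusnier's theorem the planar curve $\partial\Omega\cap H$ satisfies \eqref{condTeo1} again (with the same $\alpha$: dividing the normal curvature inequality by the cosine of the tilt of $\nu$ off $H$, the same cosine appearing in $x\cdot\nu$ since $x\in H$), while nonconvexity of $\Omega$ is inherited by a suitably chosen such slice.

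In the plane, parametrize the relevant simple closed boundary curve $\Gamma$ by arclength with outward normal $\nu$ and curvature $\kappa$, and set $\varphi=\tfrac12|x|^2$, $g=x\cdot\tau$, $h=x\cdot\nu$; then $g'=1-\kappa h$, $h'=\kappa g$, $\varphi'=g$, $\varphi''=1-\kappa h$, and at a critical point of $\varphi$ one has $g=0$, whence $\nu=\pm x/|x|$ and $h=\pm|x|$. If $\Gamma$ is not convex then $\kappa<0$ somewhere, hence $h<0$ there by \eqref{condTeo1}, so $h$ attains a negative minimum; at a point realizing it, $g=0$ and $h=-|x|$, and using $\kappa\ge-(1-\alpha)/|x|$ from \eqref{condTeo1} together with $0<\alpha<1$ one gets $\varphi''=1+|x|\kappa\ge\alpha>0$, so $\varphi$ has a strict local minimum there. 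The same estimate in fact shows that \emph{every} critical point of $\varphi$ with $h<0$ (i.e. with $\nu$ pointing towards $O$) is a strict local minimum. On the other hand, the closed planar curve $s\mapsto(g(s),h(s))$---the position vector written in the Frenet frame---has winding number $0$ about the origin, since the position vector winds once about $O$ (as $O$ lies inside $\Gamma$) and the Frenet frame winds once ($\oint\kappa\,\mathrm{d}s=2\pi$); its crossings of the $h$-axis are exactly the critical points of $\varphi$, and comparing ``winding $=0$'' with the alternation of maxima and minima along $\Gamma$ yields that the number of critical points of $\varphi$ with $h<0$ that are local maxima equals the number that are local minima. Since the former is $0$ by the preceding remark, so is the latter, contradicting the critical point produced above; this proves \eqref{eq-propC-star}. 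The main obstacle is precisely the reduction to the plane---showing that nonconvexity of $\Omega$ is visible in a slice through $O$, and that $H$ can be taken transversal with connected relevant boundary---together with the treatment of non-Morse configurations (critical points of $\varphi$ at which $\kappa=0$, etc.), where the winding-number count needs to be supplemented.

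For the second part, a convex example is any ball $B_R(x_0)$ with $|x_0|>R$, so that $O\notin\overline{B_R(x_0)}$: there $\mathrm{II}_{\partial\Omega}=R^{-1}\mathrm{Id}$, while for $x\in\partial B_R(x_0)$ one has $x\cdot\nu=R^{-1}(|x|^2-x\cdot x_0)$ and $x\cdot x_0=|x_0|^2+(x-x_0)\cdot x_0\ge|x_0|(|x_0|-R)>0$, hence $\tfrac{x\cdot\nu}{|x|^2}\le\tfrac1R=\mathrm{II}_{\partial\Omega}$ and \eqref{condTeo1} holds (even with $\alpha=0$). A nonconvex example is obtained from a ``crescent'' domain with $O\notin\overline\Omega$ whose concave boundary arc wraps part way around $O$: along such an arc $\nu$ points essentially towards $O$, so the right-hand side of \eqref{condTeo1} is negative there, and because the factor is $1-\alpha<1$ there is room to keep the (negative) curvature of the arc above it---a concave circular arc centered exactly at $O$ saturates \eqref{condTeo1} only for $\alpha=0$, so a slight flattening of the arc suffices. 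One then closes $\Omega$ up with a convex portion facing away from $O$, on which \eqref{condTeo1} holds as in the ball case, while keeping the total wrap-angle larger than $\pi$ so that $\Omega$ is nonconvex; checking \eqref{condTeo1} along the whole boundary is then a routine computation in which the parameters (wrap-angle, amount of flattening, size) must be chosen compatibly, and this is where $\alpha<1$ is used.
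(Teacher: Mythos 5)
Your overall strategy coincides with the paper's. For the first claim, both you and the paper (Lemma \ref{lemmatto}) reduce matters to proving starshapedness, $x\cdot\nu\ge 0$ on $\partial\Omega$, after which \eqref{condTeo1} forces $\mathrm{II}_{\partial\Omega}\ge 0$ and convexity of the bounded domain follows. For the second claim, both constructions start from a ball not containing the origin (your computation for $B_R(x_0)$ with $|x_0|>R$ is exactly the case singled out in Lemma \ref{lemma_exB}) and exploit the fact that the right-hand side of \eqref{condTeo1} is strictly negative where $x\cdot\nu<0$, which leaves room for a nonconvex modification of the boundary in that region. The paper's nonconvex example is a small perturbation of a flattened spherical cap, where $x\cdot\nu\le c_0<0$ uniformly, so the verification is immediate; your crescent is the same idea pushed further and would require the parameter bookkeeping you describe, but it is morally identical.

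Where you genuinely diverge is in the proof of starshapedness, and this is also where your argument has real gaps. The paper argues locally: if starshapedness fails, one finds a boundary point with $x\cdot\nu=0$ and a curve $\gamma$ in $\partial\Omega$ through it along which $\gamma\cdot\nu$ becomes positive; the Frenet identity $\tfrac{d}{ds}\left(\gamma\cdot\nu\right)=-k\,\gamma\cdot\gamma'$ and the mean value theorem then produce a point with $k<0$ and $x\cdot\nu>0$, contradicting \eqref{condTeo1}. Your global winding-number argument in a planar slice is elegant, but the two obstacles you flag are genuine, not cosmetic. (a) The reduction to $d=2$: the component of $\Omega\cap H$ containing $O$ need not be simply connected, its boundary need not be a single simple closed curve enclosing $O$, and --- more seriously --- a boundary point $p$ with $x\cdot\nu(p)<0$ need not lie on the outer boundary of that component, so the planar conclusion may simply fail to reach $p$. (b) At the minimum of $h=x\cdot\nu$ you infer $g=x\cdot\tau=0$ from $h'=\kappa g=0$, but \eqref{condTeo1} only gives a \emph{lower} bound on $\kappa$, so $\kappa=0$ there is not excluded and the critical point of $\varphi$ that seeds your contradiction may not exist. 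Both gaps are presumably repairable, but the paper's one-dimensional local argument bypasses the slicing and the degenerate-point analysis entirely; I would replace the winding-number machinery with such a direct argument along a curve in $\partial\Omega$.
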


Hence, from Theorem \ref{teo1} and Proposition \ref{propC}, we have a result of nonexistence of patterns also for non convex domains. We mention that in \cite{Nordmann} the author provides examples of nonconvex domains for which one has nonexistence of patterns for the classical Laplace operator ($\alpha=1$ without weights).

\subsection{Description of the strategy and organization of the paper}
Theorems \ref{teo2} and \ref{teo1} are consequences of another classification result which is stated in a suitable Riemannian setting. This will be also the occasion to describe the strategy of our approach. 

For $\alpha > 0$, we consider the change of variables 
\begin{equation} \label{T_def}
T:x \mapsto |x|^{\alpha-1} x \,,
\end{equation}
with $\alpha$ given by \eqref{alpha_def}, and we write \eqref{pb_general_intro} in a Riemannian setting by considering a suitable metric $g$. In this way, we obtain that \eqref{pb_general_intro} is equivalent to the study of the following problem
\begin{equation} \label{pb_Riem} 
\begin{cases}
L w + f(w) = 0 & \text{ in }\tilde \Omega \,,\\
g(\nabla w, \nu_g)=0 & \text{ on } \partial \tilde\Omega \,,
\end{cases}
\end{equation}
where $w(T(x))=u(x)$, 
\begin{equation} \label{Omegatilde_def}
\tilde \Omega = T(\Omega)
\end{equation}
and 
\begin{equation} \label{Lv_def}
L v := |x|^{d-n} \diver_g(|x|^{n-d} \nabla v)  \,.
\end{equation}
Here, and in the rest of the paper, $\nabla w = \nabla_g w$ denotes the gradient of $w$ in the Riemannian manifold $(\mathbb{R}^d,g)$, with $g$ given by
\begin{equation} \label{g_def}
g_{ij}=\delta_{ij}+\left (\frac 1{\alpha^2}-1\right )\frac{x_ix_j}{|x|^2} \,.
\end{equation}
In this setting, our  main result is the following.

\begin{theorem} \label{teo_Riem}
Let $w$ be a positive bounded weak solution to \eqref{pb_Riem} and assume that $\tilde \Omega \subset \mathbb{R}^d$, with $d \geq 3$, is bounded and convex with respect to the metric $g$. Let $f \in C^1(\mathbb R)$ satisfy \eqref{Phi_def}.
 	If $\alpha$ satisfies \eqref{hp_alfa} then $w$ is constant.
\end{theorem}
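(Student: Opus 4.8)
The plan is to adapt the integral-identity (or $P$-function / Pohozaev-type) machinery from \cite{CCR,CFR} to the Riemannian operator $L$ on $(\mathbb{R}^d,g)$. Write $w$ for the positive bounded weak solution and set $F$ to be a primitive of $f$. First I would introduce the weighted measure $\d\mu = |x|^{n-d}\,\d\mathrm{vol}_g$ so that $L$ is self-adjoint with respect to $\d\mu$, and note that in these variables the equation reads $\diver_g(|x|^{n-d}\nabla w) + |x|^{n-d} f(w) = 0$ with the conormal condition $g(\nabla w,\nu_g)=0$ on $\partial\tilde\Omega$. The number $n$ from \eqref{n_def} plays the role of the effective dimension, which is why \eqref{Phi_def} is the correct criticality threshold: the nonlinearity $f(t)/t^{(n+2)/(n-2)}$ being non-increasing is exactly what makes the relevant quadratic form sign-definite after integration by parts.

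The core step is a Bochner/Reilly-type identity for the vector field $\nabla w$ with respect to the weighted measure. I would compute $L(|\nabla w|_g^2)$ (or rather integrate $|\nabla w|_g^2$ against suitable test functions built from $w$) and use the refined Kato inequality together with the curvature term coming from the Bakry--\'Emery Ricci tensor $\mathrm{Ric}_g + \Hess_g(\log|x|^{d-n})$. The metric $g$ in \eqref{g_def} is conformally flat off the origin, and the whole point of the constant $\alpha$ in the change of variables \eqref{T_def} is that it tunes this weighted-curvature term; condition \eqref{hp_alfa}, $\alpha < \sqrt{(d-2)/(n-2)}$, should be precisely the inequality guaranteeing that the Bakry--\'Emery curvature (in the appropriate sense) is nonnegative, or equivalently that the dimensional correction in the Kato inequality absorbs the bad terms. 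Combining this with the boundary term, which is controlled because $\tilde\Omega$ is convex with respect to $g$ (so the second fundamental form $\mathrm{II}_{\partial\tilde\Omega}$ is nonnegative and the Reilly boundary integral has the right sign), one obtains an integral inequality in which every term has a definite sign.

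Next I would feed in the structural hypothesis \eqref{Phi_def} on $f$. Testing the equation against an appropriate power of $w$ and against $w$ itself, and subtracting, produces a term proportional to $\int (\Phi(w) - \Phi(c))(\text{something nonnegative})\,\d\mu$ whose monotonicity makes it have a fixed sign; reconciling this with the Bochner inequality forces $\Hess_g w$ to be pure trace (or $\nabla w$ to be, in the weighted sense, a concircular-type field), the boundary term to vanish, and finally $|\nabla w|_g \equiv 0$. Hence $w$ is constant. A technical preliminary that must be handled before any of this: justifying the integrations by parts when $O \in \tilde\Omega$, where the weight $|x|^{n-d}$ is singular (if $n>d$) — one runs the argument on $\tilde\Omega \setminus B_\rho(O)$, checks that the spurious boundary integrals on $\partial B_\rho$ vanish as $\rho \to 0^+$ using the boundedness of $w$ and elliptic estimates for $w$ away from the degeneracy, and then passes to the limit.

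The main obstacle I expect is exactly this regularity/approximation issue at the origin combined with the low regularity of weak solutions: the operator $L$ is both degenerate and singular at $O$, $g$ is not smooth there, and one needs enough control on $\nabla w$ near $O$ to discard the inner boundary terms and to make sense of $\Hess_g w$ in the rigidity step. A secondary difficulty is getting the constant in the Kato/Bochner inequality sharp enough that the threshold coming out of the computation is genuinely \eqref{hp_alfa} and not something strictly worse; this is where the precise bookkeeping of the weighted curvature term, and the choice of test functions, has to be done carefully rather than crudely.
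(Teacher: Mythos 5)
Your overall architecture matches the paper's: a weighted Bochner--Reilly-type identity for the Bakry--\'Emery structure $(\mathbb{R}^d,g,|x|^{n-d}\,d\mathrm{vol}_g)$, convexity of $\tilde\Omega$ with respect to $g$ signing the boundary term, condition \eqref{hp_alfa} making the curvature-plus-Kato deficit nonnegative, \eqref{Phi_def} signing the nonlinearity term, and excision of a small ball around the origin with asymptotic estimates to justify the integrations by parts (this last point is precisely Proposition \ref{prop_stime_cgen} and Lemma \ref{lemma_grad}). However, there is a concrete gap in your ``core step'': you propose to run the Bochner identity on $\nabla w$ itself, i.e.\ to integrate $L(|\nabla w|_g^2)$ against test functions built from $w$. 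For a general (non-stable) solution this cannot close: differentiating the equation produces the term $\int f'(w)|\nabla w|_g^2\,|x|^{n-d}dx$, whose sign is exactly what the stability hypothesis of Casten--Holland--Matano is designed to control, and \eqref{Phi_def} alone does not sign it. The paper avoids this by first performing the nonlinear change of unknown $v=w^{-\frac{2}{n-2}}$ (see \eqref{def_v}), after which the equation becomes $Lv=\hat f(v)+\tfrac n2|\nabla v|_g^2/v$ and the divergence identity of Lemma \ref{lemma_diff_ident_DEL}, weighted by $v^{1-n}$, produces on one side the quantity $\tfrac{n-1}{n}\int v^{-\frac32 n}|\nabla v|_g^2\,\Phi'\bigl(v^{-\frac{n-2}{2}}\bigr)|x|^{n-d}dx$, which is nonpositive precisely by \eqref{Phi_def} with the effective dimension $n$. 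This substitution, inherited from \cite{DEL,CFR,CCR}, is the missing idea; you gesture at the right references but never introduce it, and without it the quadratic form you obtain is not sign-definite.

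Two further points. First, the sharpness mechanism for \eqref{hp_alfa} is not that the Bakry--\'Emery curvature itself is nonnegative: the paper combines the Cauchy--Schwarz deficit $\tfrac1d(\Delta_g v)^2-\tfrac1n(Lv)^2$ with the explicit expressions \eqref{ric_g}--\eqref{H_g} to get $\textsf{k}[v]\geq \frac{d-2-\alpha^2(n-2)}{|x|^2}\bigl(|\nabla v|^2-\frac{(\nabla v\cdot x)^2}{|x|^2}\bigr)$, and \eqref{hp_alfa} signs this combination on the orthogonal complement of the radial direction. Second, the equality case does not directly give $|\nabla w|_g\equiv 0$: saturation forces $v$ to be radial with $H_v$ proportional to $g$, which still admits the non-constant profile $v=c+r^2/(\alpha^2\lambda)$; one must invoke the Neumann condition $g(\nabla v,\nu_g)=0$ on $\partial\tilde\Omega$ to exclude it. Your sketch should make both steps explicit.
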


Going back to the Euclidean setting, Theorem \ref{teo1} is a directly consequence of Theorem \ref{teo_Riem}. Indeed, condition \eqref{condTeo1} guarantees the convexity of $T(\Omega)$ with respect to the metric $g$, and we have to consider this condition since the mapping $T(x)=|x|^{\alpha-1} x$ does not preserve convexity (in general). It would be interesting to prove that Theorem \ref{teo1} fails if $\Omega$ is convex and does not satisfies \eqref{condTeo1}.
 
As far as we know, Theorems \ref{teo2}, \ref{teo1} and \ref{teo_Riem} are the first ones in literature where a Casten-Holland-Matano result is obtained for weighted operators having some degeneracy in the space variable. Moreover, we emphasize that the study of this type of weights is well-motivated by the study of CKN inequalities and, at the same time, they introduce remarkable difficulties since they can be degenerate or singular at the origin where standard elliptic theory does not apply. Moreover, as the study of classification of extremals of CKN inequalities reveals, this type of degeneracy has a strong influence on the geometry of the solution and, a priori, it is not clear what is the optimal range of the parameters in order to obtain the desired classification result. 

Regarding the proofs of our main results, we mention that Theorem \ref{teo1} immediately follows from Theorem \ref{teo_Riem}. Theorem \ref{teo2} shares part of the proof with Theorem \ref{teo_Riem}, but the conclusion follows by using a finer argument.

The main idea to prove Theorem \ref{teo_Riem} is the following. After a careful regularity analysis of the solution at the origin (in the case $O \in \tilde \Omega$) and by using the convexity of $\tilde \Omega$, we find that the function $v=w^{-\frac{2}{n-2}}$ satisfies
	\begin{equation}\label{ineq_intro}
	\int_{\tilde \Omega}v^{1-n}\textsf{k}[v]\, \, |x|^{n-d} dx  \leq \frac{n-1}{n}\int_{\tilde \Omega}  v^{-n}|\nabla v|_g^2 \Phi'(v)  \, |x|^{n-d} dx \leq 0
	\end{equation}	
where the last inequality follows from \eqref{Phi_def} and where we set
$$
\textsf{k}[v] = |H_v|^2 -\frac{1}{n}(Lv)^2 + \ric_g(\nabla v,\nabla v) + H(\nabla v,\nabla v) \,,
$$
with $H_v$ and $H$ denoting the Hessian of $v$ and $(n-d)\log|x|$, respectively.
Hence, the conclusion follows if we are able to prove that
$$
\int_{\tilde\Omega}|x|^{n-d}v^{1-n}\textsf{k}[v]\, \, dx \geq 0\,. 
$$ 
This inequality is obtained in a different way for Theorems \ref{teo2} and \ref{teo1}. In Theorem \ref{teo2} we can use that $\Omega=B_R$ is a ball and then we can take advantage of the radial symmetry of the domain (and of the operator) and argue as done in \cite[Corollary 5.4]{DEL}, which makes use of fine integral estimates in the angular component of the solution. Instead, in order to prove Theorem \ref{teo1}, i.e. when $\tilde \Omega$ is a generic convex domain, we are only able to prove the pointwise estimate 
$$
\textsf{k}[v]\geq 0 \,,
$$ 
which again yields the conclusion (but with in a smaller range of the parameters).

\begin{remark} \label{remark_f0}
We remark that the convexity assumption on the domain which appears in Theorems \ref{teo2}, \ref{teo_Riem} and \ref{teo1} can be dropped if one assumes that $f(0) \leq 0$. 

Indeed, if we assume that $f(0) \leq 0$ then from \eqref{Phi_def} we obtain that $f (t) \leq 0$ for any $t \in [0,+\infty)$. By multiplying 
$$
\diver(|x|^{-2a} D u) + |x|^{-bq}f(u) = 0
$$
by $u$ and integrating by parts in $\Omega$, from $u_\nu = 0$ on $\partial \Omega$ we obtain that 
$$
0 \leq  \int_\Omega |x|^{-2a} |D u|^2 dx = \int_\Omega |x|^{-bq}u f(u) dx \leq 0 \,,
$$
where the last inequality follows from the fact that $u f(u) \leq 0$. Hence $Du=0$ in $\Omega$ and then $u$ is constant in $\Omega$. 

For this reason, throughout this paper we will implicitly assume that $f(0)>0$, since the case $f(0) \leq 0$ is trivial.

\end{remark}

\medskip

The paper is organized as follows. In Section \ref{section_identity} we prove the integral identity implying \eqref{ineq_intro}, which is the key ingredient for the proof of Theorems \ref{teo2}, \ref{teo1} and \ref{teo_Riem}. In Section \ref{section_proofs} we give the proof of the main theorems. In Appendix \ref{appendix}, we give some regularity estimates at the origin which are essentially taken from \cite{DEL}.

\section{An integral identity} \label{section_identity}
The main goal of this section is to prove the integral identity implying \eqref{ineq_intro}, which is the key ingredient for the proof of Theorems \ref{teo2} and \ref{teo1}. As already mentioned in the introduction, it will be useful to formulate the problem in a suitable Riemannian setting.

Let $u$ be a solution to 
\begin{equation} \label{eq_u_sec2}
\begin{cases}
\diver(|x|^{-2a} D u) + |x|^{-bq}f(u) = 0& \text{ in }\Omega \,,\\
u_\nu = 0 & \text{ on } \partial \Omega \,,
\end{cases}
\end{equation}
and consider the map $T:\mathbb{R}^d \to \mathbb{R}^d$, with $T(x)= |x|^{\alpha-1} x $, and the function $w$ defined by 
\begin{equation} \label{w_u}
w(T(x)) = u(x)\,,
\end{equation}
with $\alpha$ given by \eqref{alpha_def}; straightforward computations show that $w$ satisfies
\begin{equation} \label{pb_w}
\begin{cases}
L w  + f(w) = 0 & \text{ in }\tilde \Omega \,,\\
g(\nabla w, \nu_g)=0 & \text{ on } \partial \tilde\Omega \,,
\end{cases}
\end{equation}
with
\begin{equation} \label{g_inverse}
g^{ij}=\delta_{ij}+\left (\alpha^2-1\right )\frac{x_ix_j}{|x|^2}\,,
\end{equation}
and where we set
\begin{equation}\label{f-Laplacian_def}
L w:=|x|^{d-n}\diver_g(|x|^{n-d}\nabla w)\, , 
\end{equation}
\begin{equation} \label{tildeOmega_def}
\tilde \Omega= \{x\in \R^n:|x|^{\alpha-1}x\in \Omega\}
\end{equation}
and
\begin{equation}
\nu_g^i(T(y))= \left ((\alpha-1)\frac{y_iy_j}{|y|^2}+\delta_{ij}\right )\nu^j(y) \,,
\end{equation}
for $y\in\partial \Omega$. Here we recall that we denote by $\nabla$ the gradient in the Riemannian manifold $(\mathbb{R}^d, g)$, i.e. $\nabla^i w = g^{ij}D_jw$.
We notice that the weighted operator \eqref{f-Laplacian_def} can be written in the following (useful) way 
\begin{equation}\label{f-Laplacian_bis}
L w=\Delta_g w+(n-d)g(\nabla \log \vert x\vert,\nabla w)\, .
\end{equation}
Moreover, it will be useful to consider the so-called Bakry--Émery--Ricci curvature, which is defined as follows:
\begin{equation}\label{f-Ricci}
\ric_{g}^{f}:=\ric_g + H\,,
\end{equation}
where $H$ denotes the Hessian of $(n-d)\log|x|$, with
\begin{equation}\label{ric_g}
\ric_g(\nabla w,\nabla w)=(1-\alpha^2)\frac{d-2}{|x|^2}\left (|\nabla w|^2-\frac{(\nabla w\cdot x)^2}{|x|^2} \right)
\end{equation}
\begin{equation}\label{H_g}
H(\nabla w,\nabla w)=\alpha^2 \frac{n-d}{|x|^2}\left (|\nabla w|^2-\frac{(\nabla w\cdot x)^2}{|x|^2}\right)-(n-d)\frac{(\nabla w\cdot x)^2}{|x|^4} \,.
\end{equation}

Now, we consider the function $v$ defined by
\begin{equation}
\label{def_v}
v = w^{-\frac{2}{n-2}} \, .
\end{equation}
It is straightforward to verify that $v$ is a solution to
\begin{equation} \label{pb_general}
\begin{cases}
L  v=
\hat f(v)+\dfrac{n}{2}\dfrac{|\nabla v|_g^2}{v}\,  & \text{ in }\tilde \Omega \,,\\
g(\nabla v, \nu_g)=0 & \text{ on } \partial \tilde \Omega \,,
\end{cases}
\end{equation}
where 
\begin{equation} \label{hat_f_v}
\hat f (v) = \dfrac{2}{n-2}f(v^{-\frac{n-2}{2}})v^{\frac{n}{2}}\,.
\end{equation}
The main result of this section is the following proposition.

\begin{proposition} \label{lemma_integr3}
	Let $v$ be given by \eqref{def_v}.  Then we have 
	\begin{multline}\label{prop_integr}
	\frac{n-1}{n}\int_{\tilde \Omega}  |x|^{n-d} v^{-\frac32 n}|\nabla v|_g^2  \Phi'\left(v^{-\frac{n-2}{2}}\right) dx \\ = \int_{\tilde \Omega}|x|^{n-d}v^{1-n}\textsf{\emph{k}}[v]\,	  \, dx 
 + \int_{\partial \tilde \Omega}|x|^{n-d} v^{1-n} \mathrm{II}(\nabla^T v,\nabla^T v) \, dx \,	
\end{multline}	
where $\Phi$ is given by \eqref{Phi_def}, $\mathrm{II}(\cdot,\cdot)$ denotes the second fundamental form of $\partial \tilde \Omega$, $\nabla^T v$ is tangential component of $\nabla v$ on $\partial \tilde \Omega$ and $\textsf{\emph{k}}[v]$ is given by
\begin{equation}\label{def_k}
\textsf{\emph{k}}[v]:= |H_v|^2
	+\ric_g(\nabla v,\nabla v)+ H(\nabla v,\nabla v)-\frac{1}{n}(Lv)^2\,,
\end{equation}
where $\ric_g$ and $H$ are given by \eqref{ric_g} and \eqref{H_g}, respectively.

Moreover, if $\tilde \Omega$ is convex and $\Phi'\leq0$ then 
\begin{equation}\label{prop_integr_ineq}
 \int_{\tilde \Omega}|x|^{n-d}v^{1-n}\textsf{\emph{k}}[v]\, \, dx \leq 0.
\end{equation}	
\end{proposition}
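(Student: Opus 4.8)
The identity \eqref{prop_integr} is a weighted Bochner-type (or Reilly-type) identity adapted to the Bakry--\'Emery setting. The natural starting point is the equation \eqref{pb_general} satisfied by $v$, namely $Lv = \hat f(v) + \tfrac n2 |\nabla v|_g^2 / v$. The plan is to multiply a suitable weighted Bochner formula for $v$ by the weight $|x|^{n-d} v^{1-n}$, integrate over $\tilde\Omega$, and organize the boundary terms using the Neumann condition $g(\nabla v,\nu_g)=0$. Concretely, I would begin from the pointwise weighted Bochner identity
\begin{equation*}
\tfrac12 L\bigl(|\nabla v|_g^2\bigr) = |H_v|^2 + g(\nabla v, \nabla (Lv)) + \ric_g^f(\nabla v,\nabla v),
\end{equation*}
where $\ric_g^f = \ric_g + H$ is the Bakry--\'Emery--Ricci tensor from \eqref{f-Ricci}; this is the standard Bochner formula for the drift Laplacian $L$, valid away from the origin where $g$ is smooth.

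Next I would substitute the expression for $Lv$ from \eqref{pb_general} into the term $g(\nabla v,\nabla(Lv))$, producing terms involving $\hat f'(v)|\nabla v|_g^2$ and $g(\nabla v,\nabla(|\nabla v|_g^2/v))$. The key algebraic step — and the place where the precise exponent $v^{1-n}$ and the factor $w^{-2/(n-2)}$ are tuned — is to combine the Cauchy--Schwarz-type splitting $|H_v|^2 \geq \tfrac1n (Lv)^2$ (which is what makes $\textsf{k}[v]$ the "traceless" remainder) with integration by parts so that all the "bad" zeroth-order terms in $\hat f$ reassemble, using \eqref{hat_f_v} and the definition \eqref{Phi_def} of $\Phi$, into the single clean term $\tfrac{n-1}{n}|x|^{n-d} v^{-3n/2}|\nabla v|_g^2\,\Phi'(v^{-(n-2)/2})$ on the left-hand side. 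I expect this bookkeeping — tracking how $\hat f$ and its derivative transform back into $\Phi'$ under the change $v = w^{-2/(n-2)}$, and checking that the weight exponents $n-d$ (from $L$) and $1-n$ (the test multiplier) are exactly the ones that make the divergence-structure terms telescope — to be the main obstacle; it is a delicate but mechanical computation, and the role of the fictitious dimension $n$ is precisely to make these cancellations work as they would in dimension $n$ for the unweighted problem.

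For the boundary term, after all integrations by parts one is left with an integral over $\partial\tilde\Omega$ of $|x|^{n-d} v^{1-n}$ times a boundary quantity. Using the Neumann condition $g(\nabla v,\nu_g)=0$, so that $\nabla v = \nabla^T v$ is purely tangential on $\partial\tilde\Omega$, the standard Reilly boundary manipulation (decomposing $H_v$ into tangential, mixed, and normal parts and using that $H_v(\nabla^T v,\nu_g)$ can be rewritten via the shape operator) collapses the boundary integrand to $\mathrm{II}(\nabla^T v,\nabla^T v)$, which is exactly the second term on the right of \eqref{prop_integr}. One must also verify that there is no contribution from a neighbourhood of the origin when $O\in\tilde\Omega$: here I would invoke the regularity estimates for $v$ near the origin (the Appendix, taken from \cite{DEL}) to show that the weight $|x|^{n-d}$, combined with the growth/decay of $v$, $\nabla v$, and $H_v$, makes the flux through small spheres $\partial B_\e(O)$ vanish as $\e\to0$, so the integration by parts is justified globally.

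Finally, the inequality \eqref{prop_integr_ineq} is immediate once \eqref{prop_integr} is established: if $\tilde\Omega$ is convex with respect to $g$ then $\mathrm{II}(\nabla^T v,\nabla^T v)\geq 0$, so the boundary integral is nonnegative; and if $\Phi'\leq 0$ then the left-hand side of \eqref{prop_integr} is $\leq 0$. Rearranging \eqref{prop_integr} gives
\begin{equation*}
\int_{\tilde\Omega}|x|^{n-d}v^{1-n}\textsf{k}[v]\,dx = \frac{n-1}{n}\int_{\tilde\Omega}|x|^{n-d}v^{-\frac32 n}|\nabla v|_g^2\,\Phi'\bigl(v^{-\frac{n-2}{2}}\bigr)\,dx - \int_{\partial\tilde\Omega}|x|^{n-d}v^{1-n}\mathrm{II}(\nabla^T v,\nabla^T v)\,dx \leq 0,
\end{equation*}
which is precisely \eqref{prop_integr_ineq}.
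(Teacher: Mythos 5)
Your proposal is correct and follows essentially the same route as the paper: the paper's Lemma \ref{lemma_diff_ident_DEL} is precisely the weighted Bochner formula for the drift Laplacian $L$, packaged as a pointwise divergence identity, which is then integrated against cutoffs, combined with the equation \eqref{pb_general} to reassemble the nonlinearity into $\Phi'$, reduced on $\partial\tilde\Omega$ to the $\mathrm{II}(\nabla^T v,\nabla^T v)$ term via the Neumann condition, and closed near the origin by the Appendix estimates (together with the gradient bound $|\nabla v|_g\leq C|x|^{-1}$ of Lemma \ref{lemma_grad}, which you implicitly subsume under "regularity estimates"). The deduction of \eqref{prop_integr_ineq} from convexity and $\Phi'\leq 0$ is also exactly as in the paper.
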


Before giving the proof of Proposition \ref{lemma_integr3}, we give some remark and anticipate some results whose proofs are given in Appendix \ref{appendix}. It is clear that, once \eqref{prop_integr} is proved then \eqref{prop_integr_ineq} immediately follows from the convexity of $\tilde \Omega$ since the second fundamental form of $\tilde \Omega$ is nonnegative definite. 

The proof of Proposition \ref{lemma_integr3} is based on a differential identity, which is proved in the following lemma. Some related identities can be found in \cite{BiCi} and \cite{CC}.

\begin{lemma} \label{lemma_diff_ident_DEL}
Let $v:E \to \mathbb{R}$, with $E \subset \mathbb{R}^d$, $d \geq 1$, and assume that $v\in C^3(E)$. The following differential identity  
	\begin{multline} \label{diff_ident_DEL}
	|x|^{d-n}\diver_g\Bigg(v^{1-n}L v |x|^{n-d}\nabla v + \frac{1-n}{2}v^{-n}|\nabla v|_g^2 |x|^{n-d} \nabla v \\ -\dfrac{1}{2} v^{1-n}|x|^{n-d}\nabla |\nabla v|_g^2+ v^{-n}|x|^{n-d}\frac{2(1-n)}{n(n-2)}\nabla v\Bigg) \\ 
	=-\frac{v}{n}\left (Lv-\frac{n}{2}\frac{|\nabla v|_g^2}{v}-\frac{2}{n-2}\frac{1}{v}\right )L (v^{1-n}) -v^{1-n}\textsf{\emph{k}}[v]
	\end{multline}
	holds, where $\textsf{\emph{k}}[v]$ and $n$ are given by \eqref{def_k} and \eqref{n_def}, respectively.
\end{lemma}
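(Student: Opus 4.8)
The plan is to establish the pointwise differential identity \eqref{diff_ident_DEL} by a direct, if lengthy, computation in the Riemannian manifold $(\mathbb{R}^d,g)$, treating the weighted operator $L v = |x|^{d-n}\diver_g(|x|^{n-d}\nabla v)$ as a drift Laplacian with drift potential $(n-d)\log|x|$, so that the natural Bochner-type machinery applies with $\ric_g$ replaced by the Bakry--\'Emery--Ricci tensor $\ric_g^f = \ric_g + H$. The first step is to expand the left-hand side, i.e.\ compute $|x|^{d-n}\diver_g$ of each of the four vector fields $v^{1-n}Lv\,|x|^{n-d}\nabla v$, $\tfrac{1-n}{2}v^{-n}|\nabla v|_g^2|x|^{n-d}\nabla v$, $-\tfrac12 v^{1-n}|x|^{n-d}\nabla|\nabla v|_g^2$, and $\tfrac{2(1-n)}{n(n-2)}v^{-n}|x|^{n-d}\nabla v$. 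Each such term produces a factor coming from differentiating the scalar coefficient $v^{1-n}$ or $v^{-n}$ (giving powers of $v$ times $g(\nabla v,\cdot)$), a factor from differentiating the remaining scalar (e.g.\ $Lv$, $|\nabla v|_g^2$), and a factor from $|x|^{d-n}\diver_g(|x|^{n-d}\nabla(\cdot)) = L(\cdot)$ acting on that scalar; crucially $|x|^{d-n}\diver_g(|x|^{n-d}\nabla v) = Lv$ by definition.

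The second step is to invoke the drift Bochner formula: for the operator $L$, one has
\begin{equation*}
\tfrac12 L\bigl(|\nabla v|_g^2\bigr) = |H_v|^2 + g(\nabla v,\nabla(Lv)) + \ric_g(\nabla v,\nabla v) + H(\nabla v,\nabla v),
\end{equation*}
which is the standard Weitzenb\"ock identity for the Bakry--\'Emery Laplacian. Substituting this into the expansion of the third vector-field term is what generates the $|H_v|^2$, the two curvature terms, and the $g(\nabla v,\nabla(Lv))$ cross term appearing in $\textsf{k}[v]$. The term $-\tfrac1n(Lv)^2$ in $\textsf{k}[v]$ together with the coefficient $v/n$ multiplying $L(v^{1-n})$ on the right-hand side will emerge by collecting the terms proportional to $(Lv)^2$ and $Lv$ and completing them against the PDE-type factor $\bigl(Lv - \tfrac n2 \tfrac{|\nabla v|_g^2}{v} - \tfrac{2}{n-2}\tfrac1v\bigr)$; note this bracket is precisely $Lv - \hat f(v)$ is \emph{not} used here since the identity is purely differential, but the algebraic structure of \eqref{pb_general} dictates which combination must appear. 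The chain-rule identity $L(v^{1-n}) = (1-n)v^{-n}Lv - n(1-n)v^{-n-1}|\nabla v|_g^2$ (more precisely $L(v^{1-n}) = (1-n)v^{-n}Lv + (1-n)(-n)v^{-n-1}|\nabla v|_g^2$) must be used to rewrite everything consistently in terms of $Lv$, $|\nabla v|_g^2$, $H_v$, and powers of $v$.

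The third step is bookkeeping: after all expansions, one collects terms by their monomial type in $(Lv, |\nabla v|_g^2, H_v, v)$ and checks that everything not accounted for by $-\tfrac vn(Lv - \tfrac n2\tfrac{|\nabla v|_g^2}{v} - \tfrac2{n-2}\tfrac1v)L(v^{1-n})$ and $-v^{1-n}\textsf{k}[v]$ cancels. The explicit forms \eqref{ric_g} and \eqref{H_g} of $\ric_g$ and $H$ are not needed for this lemma — only the Bochner formula with the abstract $\ric_g^f$ — so the proof is genuinely a formal manipulation valid for any smooth metric; the specific metric $g$ from \eqref{g_def} enters only when this lemma is applied in Proposition \ref{lemma_integr3}.

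The main obstacle I expect is purely the combinatorial weight of the computation: four vector fields, each giving three or four terms under $|x|^{d-n}\diver_g(|x|^{n-d}\nabla(\cdot))$, plus the substitution of the Bochner formula, plus the rewriting via the chain rule for $L(v^{1-n})$ — keeping the fractional coefficients $\tfrac{1-n}{2}$, $\tfrac1n$, $\tfrac{2(1-n)}{n(n-2)}$, $\tfrac2{n-2}$ aligned without sign errors is where all the work lies. A secondary subtlety is that the identity is stated for the \emph{flat} operators $\diver_g$ with the $|x|^{n-d}$ weight, so one must be careful that the weight $|x|^{n-d}$ is always carried inside the divergence and that $g(\nabla\cdot,\nabla\cdot)$ and $\diver_g$ are the metric quantities (not Euclidean ones) throughout; confusing the two would break the Bochner step. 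Once the identity is verified, no further ideas are required — it is a clean consequence of the drift Bochner formula and the product rule.
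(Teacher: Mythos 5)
Your plan coincides with the paper's own proof: the authors likewise expand $|x|^{d-n}\diver_g(|x|^{n-d}\,\cdot\,)$ of the same four vector fields, apply the Bochner identity to the term containing $\nabla|\nabla v|_g^2$, absorb the drift contributions of $(n-d)\log|x|$ into $H$, and collect everything against $L(v^{1-n})$ via the same chain rule; your use of the Bakry--\'Emery (drift) Bochner formula merely packages in one step what the paper does through the ordinary Bochner formula plus a separate identity for the drift terms. The only point to watch is the sign convention in that packaged step: with the weight $|x|^{n-d}=e^{-f}$ one has $f=(d-n)\log|x|$, so the Weitzenb\"ock correction is $\ric_g+\mathrm{Hess}\,f=\ric_g-\mathrm{Hess}\big((n-d)\log|x|\big)$, and your $H$ must be read as the quantity in \eqref{H_g} (the paper's convention) rather than literally as $+\mathrm{Hess}\big((n-d)\log|x|\big)$ for the signs in $\textsf{k}[v]$ to come out right.
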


\begin{proof}
From \eqref{f-Laplacian_def} we have 
\begin{multline}\label{1_vera}
|x|^{d-n}\diver_g\left(|x|^{n-d}v^{1-n}L v\nabla v\right)= (1-n) v^{-n}\vert\nabla v\vert^2L v+ v^{1-n}g(\nabla(L v),\nabla v)  + v^{1-n}(L v)^2  \\
=(1-n) v^{-n}\vert\nabla v\vert^2L v+ v^{1-n} g(\nabla(\Delta_g v),\nabla v)  
+ (n-d)v^{1-n} g(\nabla g(\nabla \log|x|,\nabla v),\nabla v) + v^{1-n}(L v)^2 \,,
\end{multline}
where we also used \eqref{f-Laplacian_bis}. 
Again from \eqref{f-Laplacian_def}, we find
\begin{equation}\label{2_vera}
|x|^{d-n}\diver_g\left(\frac{1-n}{2} |x|^{n-d} v^{-n}\vert\nabla v\vert^2\nabla v\right)=\frac{1}{2} L(v^{1-n})\vert\nabla v\vert^2+ \frac{1-n}{2}v^{-n}g(\nabla v,\nabla\vert\nabla v\vert^2 )  \, ,
\end{equation}
where we used that 
\begin{equation}\label{L v 1-n}
|x|^{d-n}\diver \left(|x|^{n-d}(1-n)v^{-n}\nabla v\right)=L(v^{1-n})\, .
\end{equation}
Since
\begin{multline*}
|x|^{d-n}\diver_g\left(-\frac{1}{2}|x|^{n-d}v^{1-n}\nabla\vert \nabla v\vert^2\right)\\ =\frac{(d-n)}{2}v^{1-n}g(\nabla \log|x|, \nabla\vert\nabla v\vert^2)-\frac{1-n}{2} v^{-n}g(\nabla v, \nabla\vert\nabla v\vert^2)- \frac{1}{2}v^{1-n}\Delta_g(\vert\nabla v\vert^2) \,,
\end{multline*}
from Bochner identity we obtain
\begin{multline}\label{3_vera}
|x|^{d-n}\diver_g\left(-\frac{1}{2}|x|^{n-d}v^{1-n}\nabla\vert \nabla v\vert^2\right)=\frac{(d-n)}{2}v^{1-n}g(\nabla \log|x|, \nabla\vert\nabla v\vert^2)-\frac{1-n}{2} v^{-n}g(\nabla v, \nabla\vert\nabla v\vert^2)\\
- v^{1-n}\vert H_v\vert^2-v^{1-n} g(\nabla(\Delta_g v),\nabla v)- v^{\gamma} \ric_g(\nabla v,\nabla v)\, .
\end{multline}	

It is immediate that from \eqref{L v 1-n} we have 
\begin{equation}\label{4_vera}
|x|^{d-n}\diver_g\left(\frac{2(1-n)}{n(n-2)}|x|^{n-d} v^{-n}\nabla v  \right)=\frac{2}{n(n-2)} L(v^{1-n})
\end{equation}
and that we have  
\begin{equation}\label{5_vera}
\frac{(d-n)}{2}g(\nabla \log |x|, \nabla\vert\nabla v\vert^2)-(d-n)g(\nabla(g(\nabla \log |x|, \nabla v)),\nabla v)= H (\nabla v,\nabla v)\, ,
\end{equation}
where $H$ is given by \eqref{H_g}. Finally, from \eqref{1_vera}, \eqref{2_vera}, \eqref{3_vera}, \eqref{4_vera} and \eqref{5_vera} we obtain
\begin{multline} \label{quasi}
	|x|^{d-n}\diver_g\left(v^{1-n}L v |x|^{n-d}\nabla v + \frac{1-n}{2}v^{-n}|\nabla v|^2 |x|^{n-d} \nabla v -\dfrac{1}{2} v^{1-n}|x|^{n-d}\nabla |\nabla v|^2+ v^{-n}|x|^{n-d}\frac{2(1-n)}{n(n-2)}\nabla v\right) \\ 
	=\frac{1}{2}|\nabla v|^2 L (v^{1-n})+\frac{2}{n(n-2)}L (v^{1-n}) + (1-n)v^{-n}|\nabla v|^2 Lv + \frac{n-1}{n}v^{1-n}(Lv)^2
	-v^{1-n}\textsf{k}[v]\, .
	\end{multline}
Since from  \eqref{L v 1-n} we have
	$$
	(1-n)v^{-n}|\nabla v|^2 Lv + \frac{n-1}{n}v^{1-n}(Lv)^2=-\frac{1}{n}v Lv L(v^{1-n})\,,
	$$ 
then the differential identity \eqref{diff_ident_DEL} follows from \eqref{quasi}.
\end{proof}

From Lemma \ref{lemma_diff_ident_DEL} we immediately obtain its counterpart in an integral form.

\begin{lemma} \label{lemma_integr1}
	Let $v\in C^2(E)$. Then, for any $\varphi\in C^{\infty}_c(E)$, we have 
	\begin{multline} \label{integral_intermediate}
	\int_{E}|x|^{n-d}\Big(-\frac{v}{n}\left (Lv-\frac{n}{2}\frac{|\nabla v|_g^2}{v}-\frac{2}{n-2}\frac{1}{v}\right )L (v^{1-n}) -v^{1-n}\textsf{\emph{k}}[v]\,
	\Big) \varphi \, dx\\=-\int_{E}|x|^{n-d} v^{1-n}g \Big(L v \nabla v + \frac{1-n}{2}\frac{|\nabla v|_g^2}{v}  \nabla v -\dfrac{1}{2} \nabla |\nabla v|_g^2+ \frac{2(1-n)}{n(n-2)}\frac{\nabla v}{v},\nabla \varphi \Big )\, dx \, .
	\end{multline}
\end{lemma}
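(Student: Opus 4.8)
The plan is to derive \eqref{integral_intermediate} by multiplying the pointwise identity \eqref{diff_ident_DEL} of Lemma \ref{lemma_diff_ident_DEL} by $|x|^{n-d}\varphi$ and integrating by parts. On the left-hand side of \eqref{diff_ident_DEL} the prefactor $|x|^{d-n}$ cancels against $|x|^{n-d}$, so after this multiplication the left-hand side becomes $\varphi\,\diver_g V$, where, factoring $|x|^{n-d}v^{1-n}$ out of the vector field inside the divergence,
\begin{equation*}
V := |x|^{n-d}v^{1-n}\left( L v\,\nabla v + \frac{1-n}{2}\frac{|\nabla v|_g^2}{v}\,\nabla v - \frac12\nabla|\nabla v|_g^2 + \frac{2(1-n)}{n(n-2)}\frac{\nabla v}{v}\right).
\end{equation*}
Integrating over $E$ against $dx$, the contribution coming from the right-hand side of \eqref{diff_ident_DEL} is precisely the left-hand side of \eqref{integral_intermediate}, so it remains only to check that $\int_E(\diver_g V)\varphi\,dx=-\int_E g(V,\nabla\varphi)\,dx$; since $g(V,\nabla\varphi)=|x|^{n-d}v^{1-n}\,g(Lv\,\nabla v+\cdots,\nabla\varphi)$, this pairing is exactly the right-hand side of \eqref{integral_intermediate}.

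The only genuine point to address is this divergence-theorem step, because the intrinsic integration-by-parts formula for $\diver_g$ carries the Riemannian volume $dV_g=\sqrt{\det g}\,dx$. However, the metric \eqref{g_def} has eigenvalue $\alpha^{-2}$ along $x$ and eigenvalue $1$ on $x^\perp$, so $\sqrt{\det g}\equiv\alpha^{-1}$ is a positive constant; hence $\diver_g V=\partial_i V^i$ coincides with the Euclidean coordinate divergence, while $g(V,\nabla\varphi)=V^i\partial_i\varphi$, and the ordinary Lebesgue integration by parts — legitimate since $\varphi\in C^{\infty}_c(E)$ — gives the identity directly with $dx$. (If one wants to be scrupulous, it is understood here that $O\notin E$, the coefficients $|x|^{n-d}$, $g_{ij}$ and the operator $L$ being smooth away from the origin; the behaviour near $O$ is treated separately in the Appendix.)

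There remains the mismatch that \eqref{diff_ident_DEL} requires $v\in C^3(E)$ whereas Lemma \ref{lemma_integr1} only assumes $v\in C^2(E)$. This is harmless because, once the divergence has been integrated away, the integrand of \eqref{integral_intermediate} involves only derivatives of $v$ up to second order, hence is well defined for $v\in C^2$. To obtain it at this regularity I would fix $\varphi$ with compact support $K\cc E$ and mollify $v$ to functions $v_\e\in C^\infty$ with $v_\e\to v$ in $C^2(K)$; since $v>0$ on $E$ (recall $v=w^{-\frac{2}{n-2}}$ with $w$ a positive solution, see \eqref{def_v}), for $\e$ small $v_\e$ is bounded above and away from $0$ on $K$, so every term — in particular the negative powers $v^{1-n},v^{-n},v^{-1}$, the term $L(v^{1-n})$ and $\textsf{k}[v]$ — converges uniformly on $K$; applying the $C^3$-case established above to each $v_\e$ and letting $\e\to0$ yields \eqref{integral_intermediate}. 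I expect this approximation to be the only step requiring a little care; all the algebra has already been carried out in Lemma \ref{lemma_diff_ident_DEL}.
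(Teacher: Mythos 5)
Your proof is correct and follows essentially the same route as the paper: multiply the differential identity of Lemma \ref{lemma_diff_ident_DEL} by $\varphi$, integrate by parts, and remove the $C^3$ assumption by approximating $v$ in the $C^2$ norm on $\mathrm{supp}\,\varphi$ by smooth functions. Your extra observation that $\sqrt{\det g}\equiv\alpha^{-1}$ is constant, so that $\diver_g$ reduces to the Euclidean coordinate divergence and the pairing $g(V,\nabla\varphi)$ to $V\cdot D\varphi$, is a correct justification of the integration-by-parts step that the paper leaves implicit.
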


\begin{proof}
Let $\varphi$ be fixed and set $U={\rm supp} (\varphi)$. By a standard approximation argument, we can consider $v^\epsilon \in C^\infty(U)$ such that $\|v^\epsilon-v\|_{C^2(U)} \to 0$ as $\epsilon \to 0^+$. The assertion immediately follows from Lemma \ref{lemma_diff_ident_DEL} applied to $v^\epsilon$, by multiplying \eqref{diff_ident_DEL} by $\varphi$, integrating by parts and then by letting $\epsilon \to 0^+$ (we recall that $n-d\geq0$).
\end{proof}

Proposition \ref{lemma_integr3} will be obtained from Lemma \ref{lemma_integr1} by an approximation argument. Indeed, if $O \in \tilde \Omega$ we cannot set $E= \tilde \Omega$ in \eqref{integral_intermediate} and get \eqref{prop_integr}, since there are issues that must be carefully taken into account. As one can expect, by approximation and by using the boundary condition $g(\nabla w, \nu)=0$, the second line in \eqref{integral_intermediate} will give the RHS of \eqref{prop_integr}. A more subtle issue is the lack of regularity of the solution (and of the equation) at the origin due to the presence of the weight. Hence we will apply Lemma \ref{lemma_integr1} by setting $E=\tilde \Omega \setminus \overline B_\epsilon$, for $\epsilon$ small enough, and then by letting $\epsilon$ to zero. In order to do this, we need the asymptotic estimates in Proposition \ref{prop_stime_cgen} below, which will be proved in Appendix \ref{appendix}. These estimates are expressed in polar coordinates $(r,\omega)\in\mathbb{R}\times\mathbb{S}^{n-1}$ and in terms of $w'$, $w''$,  $\nabla_\omega$ and $\Delta_\omega$ which denote the derivatives of $w$ with respect to the variable $r$, the gradient and the Laplacian with respect to the variable $\omega$, respectively. 

\begin{proposition}
	\label{prop_stime_cgen}
	Let $v$ be given by \eqref{def_v} and let $\tilde \Omega$ be a bounded domain containing the origin. Let $R>0$ be such that $B_R\subset \tilde\Omega$. If $\alpha \leq \alpha_{FS}$, then we have 
	\begin{enumerate}
		\item $\int_{\mathbb{S}^{d-1}}  |v'(r,\omega)|^2 d\sigma \leq   O(1)$,
		\item $\int_{\mathbb{S}^{d-1}}  |\nabla_\omega v(r,\omega)|^2 d\sigma\leq  O(r^2)$,
		\item $\int_{\mathbb{S}^{d-1}}  |\nabla_\omega v'(r,\omega)|^2 d\sigma \leq   O(1)$,
		\item  $\int_{\mathbb{S}^{d-1}}  |\nabla_\omega v'(r,\omega)-\frac 1 r \nabla_\omega v(r,\omega)|^2 d\sigma \leq O(1)$ 
	\end{enumerate}
as $r\to 0^+$.
\end{proposition}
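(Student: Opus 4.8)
The plan is to establish the four estimates in Proposition \ref{prop_stime_cgen} by tracking how the degenerate weight $|x|^{n-d}$ interacts with elliptic regularity near the origin, following the strategy of \cite{DEL}. First I would record the equation satisfied by $w$ (equivalently by $v=w^{-2/(n-2)}$) in polar coordinates $(r,\omega)$ adapted to the metric $g$: since $g$ is conformal to the Euclidean metric in the radial direction with the angular part rescaled by $\alpha^{-2}$, the operator $L$ in \eqref{f-Laplacian_bis} becomes, up to the weight $r^{n-d}$, a one-dimensional operator in $r$ plus $\alpha^2 r^{-2}\Delta_\omega$. Writing the equation as an ODE in $r$ with an angular perturbation and using $B_R\subset\tilde\Omega$, I would set up the natural energy functional $\int_{B_R} r^{n-d}(|v'|^2 + \alpha^2 r^{-2}|\nabla_\omega v|^2)\,dx$ and exploit that $w$ (hence $v$) is a positive bounded weak solution, so $v$ is bounded away from $0$ on $B_R$ and all lower-order terms $\hat f(v)$, $|\nabla v|_g^2/v$ are controlled by the energy itself.

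The key mechanism is the following: because $n = d/(1+a-b)$ is precisely the ``conformal dimension'' for which the weighted operator $r^{n-d}L$ behaves like an honest Laplacian in dimension $n$, the condition $\alpha \le \alpha_{FS} = \sqrt{(d-1)/(n-1)}$ is exactly the threshold at which the spectral gap of $\alpha^2\Delta_\omega$ on $\mathbb{S}^{d-1}$ dominates the radial ``Hardy'' weight coming from differentiating $r^{n-d}$. Concretely, I would test the equation against $v$ itself localized near $0$ to get $\int_{B_r} r^{n-d}|\nabla v|_g^2 = O(r^?)$, then bootstrap: the energy estimate combined with separation of variables (expanding $v(r,\cdot)$ in spherical harmonics $Y_k$ with eigenvalues $\mu_k = k(k+d-2)$) shows that the $k=0$ mode is $O(1)$ with bounded radial derivative — giving (1) — while each higher mode $k\ge 1$ decays at least like $r^{\gamma_k}$ with $\gamma_k$ strictly positive precisely when $\alpha \le \alpha_{FS}$, yielding the $O(r^2)$ gain in (2). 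Estimates (3) and (4) follow by differentiating the equation in $r$ (or equivalently by applying the same Hardy-type argument to $\nabla_\omega v$ and to the ``incremental quotient'' $\nabla_\omega v' - r^{-1}\nabla_\omega v$, which is the natural object since $r^{-1}\nabla_\omega$ is the angular part of the full gradient) and using the already-established bounds (1)–(2) to absorb the inhomogeneous terms.

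I expect the main obstacle to be making the spherical-harmonic/ODE analysis rigorous in the weak setting: one does not a priori know that $v\in C^3$ near $0$ (indeed the whole point of Appendix \ref{appendix} is that regularity degenerates there), so the separation of variables must be carried out at the level of the energy, via a Caccioppoli-type inequality on dyadic annuli $B_{2r}\setminus B_r$ together with an iteration/decay lemma (a discrete Gronwall or ``hole-filling'' argument) that upgrades the crude energy bound to the sharp power of $r$. Controlling the cross terms between the radial and angular parts of $|\nabla v|_g^2$ — which carry the factor $\alpha^2$ and hence the dependence on $\alpha_{FS}$ — is where the hypothesis $\alpha \le \alpha_{FS}$ must be used in its sharp form, and getting the constants right there (so that the ``bad'' term from the $r^{n-d}$ weight is genuinely dominated, not merely comparable) is the delicate point. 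Once the decay rates $\gamma_k$ are identified and shown to be positive, estimates (3)–(4) are essentially a repetition of the argument for the derivative, and the approximation by smooth functions used in Lemma \ref{lemma_integr1} then legitimizes passing to the limit $\epsilon\to 0^+$ in Proposition \ref{lemma_integr3}.
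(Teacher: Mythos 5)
Your high-level picture is the right one --- the hypothesis $\alpha\le\alpha_{FS}$ enters through the first nonzero eigenvalue $\lambda_1=d-1$ of $-\Delta_\omega$ on $\mathbb{S}^{d-1}$, which forces the non-radial part of the solution to decay a full extra power of $r$ relative to the radial part --- but two of your steps have genuine gaps. First, your decay criterion is off: the positive indicial root $\gamma_k$ of the mode-$k$ radial equation $\alpha^2\bigl(y''+\tfrac{n-1}{r}y'\bigr)-\mu_k r^{-2}y=0$ is strictly positive for every $k\ge1$ and every $\alpha>0$, so ``$\gamma_k$ strictly positive precisely when $\alpha\le\alpha_{FS}$'' is not the mechanism. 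What estimate (2) actually requires is $\gamma_1\ge1$, and $\gamma_1=\tfrac12\bigl(-(n-2)+\sqrt{(n-2)^2+4(d-1)/\alpha^2}\bigr)\ge1$ is \emph{equivalent} to $\alpha^2\le(d-1)/(n-1)$; that is the sharp form in which the hypothesis must be used. Second, and more seriously, a full spherical-harmonic decomposition does not commute with the nonlinearity: $f(u)$ couples all the angular modes, so there is no decoupled ODE for the $k$-th coefficient to which your indicial analysis applies, and your sketch does not explain how this coupling is absorbed --- which is where the actual work lies. The paper (following \cite{DEL}) passes to the Emden--Fowler variable $s=-\log r$ for $u$ in the original domain, so that the equation becomes \eqref{eq_phi3} on a half-cylinder with the nonlinear term of relative size $O(e^{-2\alpha s})$ (a consequence of the two-sided $L^\infty$ bounds on $u$ and of \eqref{Phi_def}); it then splits off \emph{only} the spherical average $\varphi_0$, sets $\Psi=e^{\sqrt{\Lambda}s}(\varphi-\varphi_0)$, and derives a scalar differential inequality for $\chi_1(s)=\tfrac12\int_{\mathbb{S}^{d-1}}|\partial_s\Psi|^2\,d\sigma$, in which the Poincar\'e inequality with constant $\lambda_1=d-1$ plays the role of your spectral gap and the mode coupling enters only as a source term of size $O(e^{-2\alpha s})\sqrt{\chi_1}$. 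A comparison argument for the resulting ODE gives $\chi_1=O(e^{-2\alpha s})$ exactly when $\sqrt{\Lambda+d-1}-\sqrt{\Lambda}\ge\alpha$, which is again equivalent to $\alpha\le\alpha_{FS}$.

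A smaller point: the obstacle you single out --- lack of $C^3$ regularity of $v$ near the origin --- is not where the difficulty sits. The solution is smooth away from $O$ by standard elliptic theory, and the Emden--Fowler (equivalently, dyadic-rescaling) change of variables makes the interior Schauder and $W^{k,2}$ estimates uniform in $s$, yielding the crude bounds $|\varphi|,|\partial_s\varphi|,|\nabla_\omega\varphi|\le Ce^{-\sqrt{\Lambda}s}$ of Proposition \ref{stime_phi} with little effort. The entire content of the Proposition is the \emph{improvement} of these crude bounds on the angular component and on the deviation of the logarithmic derivative from $\sqrt{\Lambda}$ (items (i)--(iii) in the paper's proof), which your Caccioppoli/hole-filling step would still have to produce; as written, your plan stops exactly where that improvement has to begin.
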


\noindent Now we are ready to give the proof of Proposition \ref{lemma_integr3}.

\begin{proof}[Proof of Proposition \ref{lemma_integr3}]	
Let $v$ be the solution of \eqref{pb_general}. We recall that $v$ is related to the solution $u$ of \eqref{pb_general_intro} by \eqref{w_u} and \eqref{def_v}, and then it inherits regularity properties from $u$. Standard elliptic regularity theory ensures that $u$ is smooth outside the origin, and hence the same holds for $v$. This implies that $v$ is eligible to be used in Lemma \ref{lemma_integr1} whenever the domain $E$ does not contain the origin. For this reason, in this proof we assume that $O \in \tilde \Omega$, since otherwise the proof immediately follows from Lemma \ref{lemma_integr1}.

Let $0<\varepsilon\ll1$ be fixed. We apply Lemma \ref{lemma_integr1} by setting $E=\tilde\Omega\setminus \overline B_\epsilon$ and considering  $\varphi\in C^{\infty}_c(\tilde\Omega\setminus \overline B_\epsilon)$. Hence we can write
	\begin{multline}\label{formula_palla1}
	\int_{\tilde\Omega}|x|^{n-d}\Big(-\frac{v}{n}\left (Lv-\frac{n}{2}\frac{|\nabla v|_g^2}{v}-\frac{2}{n-2}\frac{1}{v}\right )L (v^{1-n}) -v^{1-n}\textsf{k}[v]\,
	\Big) \varphi \, dx\\=-\int_{\tilde\Omega}|x|^{n-d} v^{1-n}g \Big(L v \nabla v + \frac{1-n}{2}\frac{|\nabla v|_g^2}{v}  \nabla v -\dfrac{1}{2} \nabla |\nabla v|_g^2+ \frac{2(1-n)}{n(n-2)}\frac{\nabla v}{v},\nabla \varphi \Big )\, dx \, .
	\end{multline}
	 From  \eqref{formula_palla1} and \eqref{pb_general} we have 
	\begin{multline}
	\int_{\tilde\Omega}|x|^{n-d}\Big(-\frac{1}{n}\left (v\hat f(v) -\frac{2}{n-2}\right )L (v^{1-n}) -v^{1-n}\textsf{k}[v]\,
	\Big) \varphi \, dx\\ 
	= - \int_{\tilde\Omega}|x|^{n-d} v^{1-n} \Big(\hat f(v) g(\nabla v, \nabla \varphi) + \dfrac{1}{2}\dfrac{|\nabla v|_g^2}{v}    g(\nabla v, \nabla \varphi) -\dfrac{1}{2} g(\nabla |\nabla v|_g^2,\nabla \varphi)+ \frac{2(1-n)}{n(n-2)}\frac{g(\nabla v, \nabla \varphi)}{v} \Big )\, dx \,. \label{2.32}
	\end{multline}
Now we notice that, by multiplying the equation in \eqref{pb_general} by $|x|^{n-d}v^{1-n}\hat f(v)\varphi$ and integrating by parts in ${\tilde\Omega}$, we find
	\begin{equation}\label{eq_molt}
	\begin{aligned} 
	\int_{\tilde\Omega} |x &|^{n-d}
	v^{1-n} \Big(\hat f^2(v)+\dfrac{n}{2}\dfrac{|\nabla v|_g^2}{v}\hat f(v)\Big )\varphi\,dx \\
	 & = \int_{\tilde\Omega  } |x|^{n-d}v^{1-n}L  v \hat f(v) \varphi dx
	    =-\int_{\tilde\Omega  }  |x|^{n-d}g(\nabla v ,\nabla(v^{1-n}\hat f (v)\varphi))dx \\
	& = (n-1)\int_{\tilde\Omega  }  |x|^{n-d}v^{-n}|\nabla v|_g^2\hat f (v)\varphi dx -\int_{\tilde\Omega  }  |x|^{n-d} v^{1-n} |\nabla v|_g^2\hat f'(v)\varphi dx \\ & \hspace{7cm} 
	-\int_{\tilde\Omega }  |x|^{n-d}v^{1-n}\hat f(v) g(\nabla v, \nabla \varphi)dx \,.
	\end{aligned}
	\end{equation}
	Since from \eqref{pb_general} we have
\begin{equation*}
\begin{split}
L (v^{1-n}) & =  - (n-1) v^{-n}Lv + n(n-1) v^{-n-1} |\nabla v|_g^2 \\
& = - (n-1) v^{-n}\hat f(v) + \frac{n(n-1)}{2} v^{-n-1} |\nabla v|_g^2  \,,
\end{split}
\end{equation*}
from \eqref{2.32} we have
\begin{equation*}
	\begin{aligned} 
 - \int_{\tilde\Omega} & |x|^{n-d}  v^{1-n} \Big(\hat f(v)  g(\nabla v, \nabla \varphi) + \dfrac{1}{2}\dfrac{|\nabla v|_g^2}{v}    g(\nabla v, \nabla \varphi) -\dfrac{1}{2} g(\nabla |\nabla v|_g^2,\nabla \varphi)+ \frac{2(1-n)}{n(n-2)}\frac{g(\nabla v, \nabla \varphi)}{v} \Big )\, dx= \\
	&\int_{\tilde\Omega}|x|^{n-d}\Big(\frac{n-1}{n}v^{-1-n}\left (v\hat f(v) -\frac{2}{n-2}\right )\Big(v\hat f(v)-\frac n 2 |\nabla v|^2_g \Big) -v^{1-n}\textsf{k}[v]\,
	\Big) \varphi \, dx=\\
	&\int_{\tilde\Omega}|x|^{n-d}\Big(\frac{n-1}{n}v^{-1-n}\left (v^2\hat f^2(v) -\Big(\frac{2}{n-2}+\frac n 2 |\nabla v|^2_g\Big)v \hat f(v)+\frac{n}{n-2}|\nabla v|^2_g\right )-v^{1-n}\textsf{k}[v]\,
	\Big) \varphi \, dx \,,
\end{aligned}
	\end{equation*}
and by using \eqref{eq_molt} we obtain
\begin{equation*}
	\begin{aligned} 
 - \int_{\tilde\Omega} & |x|^{n-d} v^{1-n} \Big(\hat f(v) g(\nabla v, \nabla \varphi) + \dfrac{1}{2}\dfrac{|\nabla v|_g^2}{v}    g(\nabla v, \nabla \varphi) -\dfrac{1}{2} g(\nabla |\nabla v|_g^2,\nabla \varphi)+ \frac{2(1-n)}{n(n-2)}\frac{g(\nabla v, \nabla \varphi)}{v} \Big )\, dx= \\
	&\int_{\tilde\Omega }|x|^{n-d}\Big(\frac{n-1}{n}\left ( -\Big(\frac{2}{n-2}+ n  |\nabla v|^2_g\Big)v^{-n} \hat f(v)+\frac{n}{n-2}v^{-1-n}|\nabla v|^2_g\right )-v^{1-n}\textsf{k}[v]\,
	\Big) \varphi \, dx \\
	&+ \frac{(n-1)^2}{n}\int_{\tilde\Omega }  |x|^{n-d}v^{-n}|\nabla v|_g^2\hat f (v) \varphi dx -\frac{n-1}{n}\int_{\tilde\Omega }  |x|^{n-d} v^{1-n}|\nabla v|_g^2\hat f'(v) \varphi dx \\
	&-\frac{n-1}{n} \int_{\tilde\Omega}  |x|^{n-d}v^{1-n}\hat f(v) g(\nabla v, \nabla \varphi)dx,
	\end{aligned}
	\end{equation*}
	that is 
	\begin{equation*}
	\begin{aligned}
	\frac{n-1}{n}\int_{\tilde\Omega} & |x|^{n-d} v^{1-n}|\nabla v|_g^2\hat f'(v) \varphi dx 
	\\& =  \int_{\tilde\Omega}|x|^{n-d} v^{1-n} \Big( \dfrac{1}{2}\dfrac{|\nabla v|_g^2}{v}    g(\nabla v, \nabla \varphi) -\dfrac{1}{2} g(\nabla |\nabla v|_g^2, \nabla \varphi)+ \frac{2(1-n)}{n(n-2)}\frac{g(\nabla v,  \nabla \varphi)}{v} \Big )\, dx \, \\
	&+ \int_{\tilde\Omega }|x|^{n-d}\Big(\frac{n-1}{n}\left ( -\Big(\frac{2}{n-2}+   |\nabla v|^2_g\Big)v^{-n} \hat f(v)+\frac{n}{n-2}v^{-1-n}|\nabla v|^2_g\right )-v^{1-n}\textsf{k}[v]\,
	\Big)\varphi \, dx \\
	&+\frac{1}{n}\int_{\tilde\Omega}  |x|^{n-d}v^{1-n}\hat f(v) g(\nabla v,  \nabla \varphi)dx. 
	\end{aligned}
	\end{equation*}
	Therefore, since from \eqref{Phi_def} and \eqref{hat_f_v} we have 
	$$
	\hat f'(v) = - \frac{2}{n-2} \frac{\Phi(v^{-\frac{n-2}{2}})}{v^2} - \frac{\Phi'(v^{-\frac{n-2}{2}})}{v^{1+\frac{n}{2}}},
	$$
	we obtain 
	\begin{equation}\label{2.50}
	\begin{aligned}
	-\frac{n-1}{n}\int_{\tilde\Omega} & |x|^{n-d} v^{-\frac32 n}|\nabla v|_g^2 \Phi'\left(v^{-\frac{n-2}{2}}\right)  \varphi  dx \\
	 & = \int_{\tilde\Omega}|x|^{n-d} v^{1-n} \Big( \dfrac{1}{2}\dfrac{|\nabla v|_g^2}{v}    g(\nabla v, \nabla \varphi) -\dfrac{1}{2} g(\nabla |\nabla v|_g^2,\nabla  \varphi)+ \frac{2(1-n)}{n(n-2)}\frac{g(\nabla v, \nabla  \varphi)}{v} \Big )\, dx \,\\
	&+\int_{\tilde\Omega}|x|^{n-d}\Big(\frac{n-1}{n}\left ( -\frac{4}{(n-2)^2}v^{-1-n}  \Phi\left(v^{-\frac{n-2}{2}}\right)+\frac{n}{n-2}v^{-1-n}|\nabla v|^2_g\right )-v^{1-n}\textsf{k}[v]\,
	\Big) \varphi \, dx \\
	& +\frac{2}{n(n-2)} \int_{\tilde\Omega}  |x|^{n-d}v^{-n} \Phi\left(v^{-\frac{n-2}{2}}\right) g(\nabla v, \nabla \varphi)dx.
	\end{aligned}
	\end{equation}
Now we notice that by multiplying the equation in \eqref{pb_general} by $|x|^{n-d}v^{-n}\varphi$ and integrating by parts, we obtain
\begin{multline}	\label{eq_int2}
\int_{\tilde\Omega } |x|^{n-d}
v^{-n}\Big(\frac{2}{n-2} \frac{\Phi\left(v^{-\frac{n-2}{2}}\right)}{v}+\dfrac{n}{2}\dfrac{|\nabla v|_g^2}{v}\Big ) \varphi \,dx  =-\int_{\tilde\Omega }  |x|^{n-d}g(\nabla v ,\nabla(v^{-n}\varphi))dx \\
 =n\int_{\tilde\Omega }  |x|^{n-d}v^{-1-n}|\nabla v|_g^2\varphi dx - \int_{\tilde\Omega}  |x|^{n-d}v^{-n} g(\nabla v, \nabla \varphi)dx \,.
\end{multline}
Hence, from \eqref{2.50} and \eqref{eq_int2} we finally have 
\begin{multline} \label{daqua}
	-\frac{n-1}{n}\int_{\tilde\Omega }  |x|^{n-d} v^{-\frac32 n}|\nabla v|_g^2 \Phi'\left(v^{-\frac{n-2}{2}}\right) \varphi dx 
=\frac{1}{2}\int_{\tilde\Omega}|x|^{n-d} v^{1-n}  \dfrac{|\nabla v|_g^2}{v}    g(\nabla v, \nabla \varphi) dx \\
	-\frac{1}{2} \int_{\tilde\Omega} |x|^{n-d} v^{1-n} g(\nabla |\nabla v|_g^2,\nabla \varphi) \, dx \, 
	-\int_{\tilde\Omega}|x|^{n-d}v^{1-n}\textsf{k}[v]\,
	\varphi \, dx  \\ +\frac{2}{n(n-2)}\int_{\tilde\Omega}  |x|^{n-d}v^{-n}\Phi\left(v^{-\frac{n-2}{2}}\right) g(\nabla v, \nabla \varphi)dx 
	+\frac{2}{ n(n-2)}\int_{\tilde\Omega}  |x|^{n-d}v^{-n}g(\nabla v, \nabla \varphi) \, dx 
\end{multline}
for any $\varphi\in C^{\infty}_c(\tilde\Omega\setminus \overline B_\epsilon)$.

Let $\delta, \varepsilon>0$ be small enough and define 
$$
\tilde \Omega_\delta = \{x \in \tilde \Omega:\  \dist(x,\partial  \tilde\Omega) \geq \delta  \}\,.
$$ Up to a standard density argument, we can choose $\varphi$ in \eqref{daqua} as follows: 
\begin{equation*}
\varphi(x) = 
\begin{cases}
0 &   |x| \leq \varepsilon  \\
\frac1 \delta (|x|-\varepsilon) &  \varepsilon < |x| < \varepsilon + \delta \\
1 &   \varepsilon + \delta \leq |x| \textmd{ and } x \in \tilde \Omega_\delta \\
\frac{1}{\delta} \dist(x,\partial  \tilde\Omega) &  x \in \tilde \Omega \setminus \tilde \Omega_\delta  \\
0 & x \not \in \tilde\Omega \,.
\end{cases}
\end{equation*}
By letting $\delta \to 0^+$, from \eqref{daqua} we obtain 
\begin{multline} \label{bordo}
	-\frac{n-1}{n}\int_{\tilde\Omega \setminus B_\varepsilon}  |x|^{n-d} v^{-\frac32 n}|\nabla v|_g^2 \Phi'\left(v^{-\frac{n-2}{2}}\right)  dx + \int_{\tilde\Omega \setminus B_\varepsilon}|x|^{n-d}v^{1-n}\textsf{k}[v] \, dx\\
=  - \frac{1}{2}\int_{\partial (\tilde\Omega \setminus B_\varepsilon)}|x|^{n-d} v^{-n}  |\nabla v|_g^2    g(\nabla v, \nu_g) d\sigma  
	+ \frac{1}{2} \int_{\partial (\tilde\Omega \setminus B_\varepsilon)} |x|^{n-d} v^{1-n} g(\nabla |\nabla v|_g^2,\nu_g) \, d\sigma  \, 
  \\ - \frac{2}{n(n-2)}\int_{\partial (\tilde\Omega \setminus B_\varepsilon)}  |x|^{n-d}v^{-n}\Phi\left(v^{-\frac{n-2}{2}}\right) g(\nabla v, \nu_g)d\sigma 
	- \frac{2}{n(n-2)}\int_{\partial (\tilde\Omega \setminus B_\varepsilon)}  |x|^{n-d}v^{-n}g(\nabla v, \nu_g) \, d\sigma  \,,
\end{multline}
where $\nu_g$ is the outward normal vector to $\tilde \Omega \setminus \overline B_\varepsilon$. 

In order to deal with the term $g(\nabla |\nabla v|_g^2,\nu_g)$, we consider $\{e_1,\ldots, e_{d-1}, e_d\}$ to be a local orthonormal frame such that $\{e_1,\ldots, e_{d-1}\}$ are tangent to $\partial(\tilde\Omega \setminus B_\varepsilon)$ at $q \in \partial (\tilde\Omega \setminus B_\varepsilon)$ and $e_d$ is the outward normal vector. For two vectors $X,Y$ tangent to $\partial(\tilde\Omega \setminus B_\varepsilon)$, we define the second fundamental form of $\partial(\tilde\Omega \setminus B_\varepsilon)$ by $\mathrm{II}(X;Y)=g(D_Xe_d,Y)$, where $D_X$ is the covariant derivative of the Riemannian connection on $(R^d,g)$ and the second fundamental form is chosen in such a way that spheres have positive mean curvature. For two vectors $a,b$ in the tangent space to $(R^d,g)$ at a point $x$, the Hessian tensor of $v$ is given by $H_g v (a,b) = a(bv)-(D_ab)v$. With this notation, we have that
\begin{equation*}
\begin{split}
\frac12 g(\nabla |\nabla v|_g^2,\nu_g) & = \sum_{i=1}^{d-1} v_{id} v_i \sum_{i=1}^{d-1} [e_i(e_d v)-(D_{e_i}e_d)v] v_i = g(\nabla^T\partial_{\nu_g} v, \nabla^Tv) - \mathrm{II}(\nabla^Tv,\nabla^Tv) \,,
\end{split}
\end{equation*}
where $\nabla^Tv$ is the tangential gradient of $v$ on $\partial \tilde \Omega$; hence we have
\begin{multline} \label{bordo}
	-\frac{n-1}{n}\int_{\tilde\Omega \setminus B_\varepsilon}  |x|^{n-d} v^{-\frac32 n}|\nabla v|_g^2 \Phi'\left(v^{-\frac{n-2}{2}}\right)  dx + \int_{\tilde\Omega \setminus B_\varepsilon}|x|^{n-d}v^{1-n}\textsf{k}[v] \, dx \\
=  - \frac{1}{2}\int_{\partial (\tilde\Omega \setminus B_\varepsilon)}|x|^{n-d} v^{-n}  |\nabla v|_g^2    g(\nabla v, \nu_g) d\sigma \\  
+ \int_{\partial (\tilde\Omega \setminus B_\varepsilon)} |x|^{n-d} v^{1-n} g(\nabla^T\partial_{\nu_g} v, \nabla^Tv) \, d\sigma  \, 
	- \int_{\partial (\tilde\Omega \setminus B_\varepsilon)} |x|^{n-d} v^{1-n} \mathrm{II}(\nabla^Tv,\nabla^Tv)  \, d\sigma  \, \\
  \\ - \frac{2}{n(n-2)}\int_{\partial (\tilde\Omega \setminus B_\varepsilon)}  |x|^{n-d}v^{-n}\Phi\left(v^{-\frac{n-2}{2}}\right) g(\nabla v, \nu_g)d\sigma 
	- \frac{2}{n(n-2)}\int_{\partial (\tilde\Omega \setminus B_\varepsilon)}  |x|^{n-d}v^{-n}g(\nabla v, \nu_g) \, d\sigma  \,,
\end{multline}

We first consider the boundary term on $\partial \tilde \Omega$ appearing on the RHS of \eqref{bordo}, that is
\begin{multline} \label{J_RHS}
J_{\partial \tilde \Omega}:=  - \frac{1}{2}\int_{\partial \tilde \Omega}|x|^{n-d} v^{-n}  |\nabla v|_g^2    g(\nabla v, \nu_g) d\sigma  \\
	+ \int_{\partial \tilde\Omega } |x|^{n-d} v^{1-n} g(\nabla^T\partial_{\nu_g} v, \nabla^Tv) \, d\sigma  \, 
	- \int_{\partial \tilde\Omega } |x|^{n-d} v^{1-n} \mathrm{II}(\nabla^Tv,\nabla^Tv)  \, d\sigma  \,
  \\ - \frac{2}{n(n-2)}\int_{\partial \tilde \Omega}  |x|^{n-d}v^{-n}\Phi\left(v^{-\frac{n-2}{2}}\right) g(\nabla v, \nu_g)d\sigma 
	- \frac{2}{n(n-2)}\int_{\partial \tilde \Omega}  |x|^{n-d}v^{-n}g(\nabla v, \nu_g) \, d\sigma  \,.
\end{multline}
Since $\partial_{\nu_g} v = 0$ on $\partial \tilde \Omega$, we have
\begin{equation} \label{Jleq0}
J_{\partial \tilde \Omega} = - \int_{\partial \tilde \Omega} |x|^{n-d} v^{1-n} \mathrm{II}(\nabla^Tv,\nabla^Tv) \, d\sigma \,.
\end{equation}
	
Now we consider the boundary term on $\partial B_\varepsilon$ appearing on the RHS of \eqref{bordo}, that is
\begin{multline} \label{J_RHS_Be}
J_{\partial B_\varepsilon}:=  - \frac{1}{2}\int_{\partial B_\varepsilon}|x|^{n-d} v^{1-n}  \dfrac{|\nabla v|_g^2}{v}    g(\nabla v, \nu_g) d\sigma  \\
	+ \int_{\partial B_\varepsilon } |x|^{n-d} v^{1-n} g(\nabla^T\partial_{\nu_g} v, \nabla^Tv) \, d\sigma  \, 
	- \int_{\partial B_\varepsilon } |x|^{n-d} v^{1-n} \mathrm{II}(\nabla^Tv,\nabla^Tv)  \, d\sigma  \,
	  \\ - \frac{2}{n(n-2)}\int_{\partial B_\varepsilon}  |x|^{n-d}v^{-n}\Phi\left(v^{-\frac{n-2}{2}}\right) g(\nabla v, \nu_g)d\sigma 
	- \frac{2}{n(n-2)}\int_{\partial B_\varepsilon}  |x|^{n-d}v^{-n}g(\nabla v, \nu_g) \, d\sigma  \,,
\end{multline}
and we show that $J_{\partial B_\varepsilon}$ vanishes as $\varepsilon \to 0^+$. We notice that in this case $\nu_g$ is the inner normal to $B_\varepsilon$. In order to estimate $J_{\partial B_\varepsilon}$ it will be useful to write the integrals in polar coordinates, and denote by $v' $ the radial derivative of $v$. 

We first notice that, since $u$ is bounded, say $0<u \leq C$, then from \eqref{w_u} and \eqref{def_v} we obtain that $v\geq C^{-\frac{2}{n-2}}$ and then $v^{-n}$ and $v^{1-n}$ are bounded from above. Moreover, from \eqref{Phi_def} we also have that $\Phi(v)$ is bounded.  Hence, from \eqref{J_RHS_Be} we have that there exists $C'>0$, depending only on the dimension and the $C^0$ norm of $v$, such that
\begin{equation*}
\begin{split}
|J_{\partial B_\varepsilon}| & \leq C'\varepsilon^{n-1} \int_{\mathbb{S}^{d-1}}  \Big ( |\nabla v|_g^3 + |\nabla^T v'|_g |\nabla^Tv|_g + \frac{1}{\varepsilon} |\nabla^Tv|^2_g + |\nabla v|_g \Big ) \, d\sigma \\
& \leq C''\varepsilon^{n-1} \int_{\mathbb{S}^{d-1}}  \Big ( |\nabla v|_g^3 + |\nabla^T v'|_g^2+ \frac{1}{\varepsilon} |\nabla^Tv|^2_g +  |\nabla v|_g\Big ) \, d\sigma 
\end{split}
\end{equation*}
where we have also used Young's inequality, that $|x| = \varepsilon$ and that $|\mathrm{II}(\nabla^Tv,\nabla^Tv)| \leq \varepsilon^{-1} |\nabla^Tv|^2_g$ on $\partial B_\varepsilon$.

From Lemma \ref{lemma_grad} below, we know that $|\nabla v|_g \leq C |x|^{-1}$ as $x \to O$, and then can write
\begin{equation*}
\begin{split}
|J_{\partial B_\varepsilon}|  & \leq C'''\varepsilon^{n-1} \int_{\mathbb{S}^{d-1}}  \Big ( \frac{1}{\varepsilon }(|\nabla v|_g^2  + |\nabla^Tv|^2_g + 1)+|\nabla^T v'|^2_g   \Big ) \, d\sigma \\
& \leq C_*\left[ \varepsilon^{n-2} \int_{\mathbb{S}^{d-1}}( |\nabla v|_g^2 +1) \, d\sigma+ \varepsilon^{n-1} \int_{\mathbb{S}^{d-1}} |\nabla^T v' |^2_g  \, d\sigma \right] \,.
\end{split}
\end{equation*}
Since $\nabla^T v =\nabla_\omega v$, $|\nabla^T v|^2_g =r^{-2} |\nabla_\omega v|^2$ and 
$$
|\nabla v|_g^2 = \alpha^2 (v')^2 + \frac{|\nabla_\omega v|^2}{r^2} \,,
$$
from Proposition \ref{prop_stime_cgen} and $n \geq d \geq 3$, we obtain that $J_{\partial B_\varepsilon}\to 0$ as $\varepsilon \to 0$. From \eqref{bordo} and \eqref{Jleq0} we get
	\begin{multline*}
	\frac{n-1}{n}\int_{\tilde\Omega}  |x|^{n-d} v^{-\frac32 n}|\nabla v|_g^2 \Phi'\left(v^{-\frac{n-2}{2}}\right)  dx \\ =
	\int_{\tilde\Omega}|x|^{n-d}v^{1-n}\textsf{k}[v]\,
	\, dx + \int_{\partial \tilde\Omega}|x|^{n-d} v^{1-n} \mathrm{II}(\nabla^T v,\nabla^T v) \, dx \, ,
	\end{multline*}		
which completes the proof.	
\end{proof}

It remains to prove the following lemma which has been used in the proof of Proposition \ref{lemma_integr3}.

\begin{lemma} \label{lemma_grad}
Let $v$ be given by \eqref{def_v}, i.e. $v = w^{-\frac{2}{n-2}}$ where $w$ satisfies \eqref{pb_w}, and assume that $O \in \tilde \Omega$. Then we have 
$$
|\nabla v|_g \leq C|x|^{-1}
$$
where $C$ depends only on $\|w\|_{L^{\infty}(\tilde \Omega)}$ and $\|f\|_{C^0([0,\|w\|_\infty])}$.
\end{lemma}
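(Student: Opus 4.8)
The plan is to obtain the bound by a rescaling argument reducing everything to a standard interior estimate on a \emph{fixed} annulus. First I reduce the claim to a bound on $w$. Since $w$ is a positive solution of the weighted equation which is bounded above, it is also bounded below by a positive constant: away from the origin this follows from Harnack's inequality for the operator $\diver(|x|^{-2a}D\,\cdot\,)$ (and, near $\partial\tilde\Omega$, from the boundary version of it), while near the origin it follows from the continuity of $w$ at $O$ recalled in Appendix \ref{appendix} together with Proposition \ref{prop_stime_cgen}. Hence there are constants $c_0,c_1>0$, depending only on $\|w\|_{L^\infty(\tilde\Omega)}$ and $\inf_{\tilde\Omega}w$, with $c_0\le v\le c_1$, and from $v=w^{-\frac{2}{n-2}}$ one gets $|\nabla v|_g=\tfrac{2}{n-2}\,v^{n/2}\,|\nabla w|_g\le C\,|\nabla w|_g$. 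Thus it suffices to prove $|\nabla w|_g(x)\le C|x|^{-1}$ for $|x|$ small; for $x$ ranging in a compact subset of $\tilde\Omega\setminus\{O\}$ the inequality is immediate, since there $v$ is smooth (up to $\partial\tilde\Omega$, using $f\in C^1$ and the Neumann condition) and $|x|^{-1}$ is bounded below.

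Fix $r_0\in(0,1]$ with $\overline{B_{2r_0}}\subset\tilde\Omega$ (possible because $O\in\tilde\Omega$, which is open), and for $0<r<r_0$ set $w_r(y):=w(ry)$ on the annulus $A:=\{\,y:\tfrac12<|y|<2\,\}$. The key point is that the cone metric $g$ has constant determinant ($\det g\equiv\alpha^{-2}$) and coefficients $g^{ij}$ that are homogeneous of degree zero in $x$; a direct computation using \eqref{f-Laplacian_bis} then shows that the weighted operator is scale covariant, namely $Lw_r(y)=r^{2}(Lw)(ry)$. Since $Lw=-f(w)$, this gives
$$
L w_r + r^{2} f(w_r)=0\qquad\text{in }A.
$$

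Now I apply interior regularity on $A$. Writing $L=\Delta_g+(n-d)\,g(\nabla\log|x|,\nabla\,\cdot\,)=g^{ij}\partial_{ij}+b^{j}\partial_{j}$, the coefficients are smooth on $\overline A$ and $L$ is uniformly elliptic there, the eigenvalues of $(g^{ij})$ being $1$ and $\alpha^{2}>0$. Moreover $\|w_r\|_{L^\infty(A)}\le\|w\|_{L^\infty(\tilde\Omega)}$ and $\|r^{2}f(w_r)\|_{L^\infty(A)}\le r_0^{2}\,\|f\|_{C^0([0,\|w\|_\infty])}\le\|f\|_{C^0([0,\|w\|_\infty])}$. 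By the interior Calderón--Zygmund estimate followed by Sobolev embedding (or by De Giorgi--Nash--Moser and Schauder) we obtain, on the smaller annulus $A':=\{\,\tfrac34<|y|<\tfrac32\,\}$,
$$
\|w_r\|_{C^{1}(\overline{A'})}\le C\Big(\|w\|_{L^\infty(\tilde\Omega)}+\|f\|_{C^0([0,\|w\|_\infty])}\Big),
$$
with $C$ depending only on $d$, $n$ and $\alpha$, and \emph{independent of $r$}. Finally I undo the scaling: from $\nabla_g w_r(y)=r\,(\nabla_g w)(ry)$ and $g_{ij}(y)=g_{ij}(ry)$ one gets $|\nabla w|_g(ry)=r^{-1}|\nabla w_r|_g(y)$; evaluating at any $y$ with $|y|=1$ and setting $x=ry$ (so $|x|=r<r_0$) yields $|\nabla w|_g(x)\le C|x|^{-1}\big(\|w\|_{L^\infty(\tilde\Omega)}+\|f\|_{C^0([0,\|w\|_\infty])}\big)$. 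Combined with the reduction of the first paragraph, this proves the lemma.

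I expect the substantive point to be the scale-covariance computation $Lw_r(y)=r^{2}(Lw)(ry)$: the content of the whole argument is that, after rescaling, the degenerate operator $L$ becomes a fixed uniformly elliptic operator on the annulus $A$ with a perturbation $r^{2}f(w_r)$ which is not merely bounded but vanishes as $r\to0$, so that classical interior estimates apply with constants uniform in $r$; everything else is routine. A secondary technical point, already used implicitly in the proof of Proposition \ref{lemma_integr3}, is the positive lower bound $\inf_{\tilde\Omega}w>0$ needed to pass from $w$ to $v$, which rests on Harnack's inequality for the $A_2$-weighted operator and on the regularity of $w$ at the origin from Appendix \ref{appendix}.
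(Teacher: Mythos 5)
Your argument is correct and is essentially the paper's own proof: the same rescaling $w_r(y)=w(ry)$ to a fixed annulus, the same scale covariance $Lw_r=r^2(Lw)(r\,\cdot)$ exploiting the zero-homogeneity of $g$, uniform ellipticity of $L$ away from the origin, interior gradient estimates with constants independent of $r$, and undoing the scaling. The only cosmetic difference is your more explicit justification of the positive lower bound on $w$ (the paper simply invokes $C^{-1}\le w\le C$, established via Hopf's lemma in the Appendix), which does not change the substance.
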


\begin{proof}
Let $\rho>0$ be such that $B_\rho \subset \tilde\Omega$. For $\mu \in (0, \rho/4)$ and $x \in E:= B_4 \setminus \overline B_1$, we define $\psi_\mu(x)=w(\mu x)$. From \eqref{pb_w} we have that 
$$
L \psi_\mu(x) = - \mu^2 f(\psi_\mu(x)) \quad \text{for } x \in E \,.
$$
Since $C^{-1} \leq w \leq C$ then $\mu^2 f(\psi_\mu(x))$ is bounded uniformly with respect to $\mu$. Since $|x|>1$ then the operator $L$ is uniformly elliptic, and elliptic regularity estimates yield that $|\nabla \psi_\mu(x)| \leq K$ in $B_3 \setminus \overline B_2$, where $K$ does not depend on $\mu$ and depends only on $\|w\|_{L^{\infty}(\tilde \Omega)}$ and $\|f\|_{C^0([0,\|w\|_\infty])}$. Since the metric $g$ is zero-homogeneous, then 
$$
|\nabla w(x)| \leq \frac{K}{\mu}
$$ 
for $x \in B_3 \setminus \overline B_2$. Since $v = w^{-\frac{2}{n-2}}$ and by letting $\mu$ vary in $(0, \rho/4)$ we conclude. 
\end{proof}

\section{Proof of the main theorems} \label{section_proofs}
In this section we give the proof of Theorems \ref{teo2}, \ref{teo1} and \ref{teo_Riem}. In both cases, we start from \eqref{prop_integr_ineq} and need to prove the reverse inequality. Before proving the main theorems, we need the following lemma which compares the second fundamental form of $\partial \Omega$ in the Euclidean space $\R^d$ and the second fundamental form of $\partial \tilde \Omega$ with respect to the new metric $g$, which we obtained after the mapping $\tilde \Omega = T (\Omega)$.

\begin{lemma} \label{lemma_diff_geo}
Let $T$ and $g$ be given by \eqref{T_def} and \eqref{g_def}, respectively. Let $\Omega \subset \mathbb{R}^d$ and set $\tilde \Omega=T(\Omega)$.
Let $\mathrm{II_{\partial \Omega}}$ and $\mathrm{II}^g_{\partial \tilde \Omega}$ be the second fundamental forms of $\partial \Omega$ and $\partial \tilde \Omega$, respectively. Then, for any $x \in \partial  \Omega$, we have 
\begin{equation} \label{II_II_g}
\mathrm{II}^{g}_{\partial \tilde \Omega} =\vert x\vert^{\alpha-1}\left(\mathrm{II}_{\partial \Omega}+ (\alpha-1) \frac{x \cdot \nu}{|x|^2} g_E \right)\, ,
\end{equation}
where $\mathrm{II}^{g}_{\partial \tilde \Omega} $ and $\mathrm{II}_{\partial \Omega}$ are evaluated at $y=T(x)$ and $x$, respectively, $g_E$ denotes the Euclidean metric and $\nu$ is the Euclidean outward normal to $\partial \Omega$ at $x$.  
\end{lemma}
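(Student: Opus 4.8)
The statement is a change-of-variables formula for the second fundamental form under $T(x)=|x|^{\alpha-1}x$, so the natural route is a direct computation, splitting the map into a conformal rescaling followed by reading off how the metric $g$ (which is precisely the push-forward of the Euclidean metric under $T^{-1}$) distorts normal directions. Concretely, I would first record that $T$ is a diffeomorphism of $\mathbb{R}^d\setminus\{O\}$ with differential $dT_x = |x|^{\alpha-1}\bigl(\mathrm{Id} + (\alpha-1)\,\tfrac{x\otimes x}{|x|^2}\bigr)$, i.e.\ it acts as multiplication by $\alpha|x|^{\alpha-1}$ in the radial direction and by $|x|^{\alpha-1}$ on the orthogonal complement. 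Since by construction $g$ is the metric making $T:(\mathbb{R}^d,g)\to(\mathbb{R}^d,g_E)$ an isometry onto its image with the Euclidean metric — this is exactly why $g^{ij}=\delta_{ij}+(\alpha^2-1)x_ix_j/|x|^2$ appears in \eqref{g_inverse} — the second fundamental form of $\partial\tilde\Omega$ in $(\mathbb{R}^d,g)$ equals the pull-back under $T^{-1}$ of the Euclidean second fundamental form of $\partial\Omega$. So the content is: compute how pulling back $\mathrm{II}_{\partial\Omega}$ along this conformal-in-the-sphere-directions map transforms it.

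**Key steps, in order.** (1) Parametrize $\partial\tilde\Omega$ near $y=T(x)$ by pushing forward a local parametrization of $\partial\Omega$ near $x$; the unit $g$-normal $\nu_g$ at $y$ is the image under $dT$ of the Euclidean normal $\nu$ at $x$, suitably renormalized — and because $g$ is engineered so that $T$ is a $g$-to-$g_E$ isometry, $\nu_g$ has unit $g$-length automatically once $\nu$ has unit Euclidean length. (2) Write $\mathrm{II}^g_{\partial\tilde\Omega}(X,Y)=g(D^g_X\nu_g,Y)$ for $X,Y$ tangent to $\partial\tilde\Omega$, transport everything back to $\partial\Omega$ via $T^{-1}$, and use that the Levi-Civita connection of $g$ is the $T$-pull-back of the Euclidean one, so that $g(D^g_X\nu_g,Y) = g_E(D^{E}_{X'}\nu',Y')$ where primes denote $T^{-1}$-images. (3) Now the only genuine computation: $T^{-1}$ near $\partial\Omega$ is itself of the form $z\mapsto|z|^{1/\alpha-1}z$, whose differential rescales tangential and normal directions by different powers of $|x|$; track the tangential vectors (which get scaled by $|x|^{1-\alpha}$... careful with which direction of the map) and the derivative of $\nu'$ along a tangential direction. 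The extra term $(\alpha-1)\tfrac{x\cdot\nu}{|x|^2}g_E$ arises precisely because differentiating the normalizing factor $|x|^{\alpha-1}$ (or its inverse) along a tangent vector that has a nonzero radial component $x\cdot\nu\neq0$ produces a term proportional to the metric; this is the standard conformal-change identity $\mathrm{II}^{e^{2\phi}g} = e^{\phi}(\mathrm{II}^g + \partial_\nu\phi\, g)$ specialized to $\phi=(\alpha-1)\log|x|$ on the sphere directions, with $\partial_\nu\phi = (\alpha-1)\,x\cdot\nu/|x|^2$. (4) Collect the overall factor $|x|^{\alpha-1}$ and conclude \eqref{II_II_g}.

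**Main obstacle.** The subtle point is that $T$ is \emph{not} conformal — it is anisotropic, scaling radial and spherical directions by $\alpha|x|^{\alpha-1}$ versus $|x|^{\alpha-1}$ — so one cannot simply quote the conformal-change-of-$\mathrm{II}$ formula verbatim. The care needed is in bookkeeping: decomposing every tangent vector to $\partial\Omega$ into its radial part (length $\propto x\cdot\nu/|x|$) and its spherical part, tracking that $dT$ and $dT^{-1}$ treat these differently, and verifying that the radial-direction anisotropy contributes exactly the $(\alpha-1)\tfrac{x\cdot\nu}{|x|^2}g_E$ correction while the overall spherical scaling contributes the prefactor $|x|^{\alpha-1}$. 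I would do this cleanly by choosing at $x$ an adapted orthonormal frame $\{\partial_r, e_1,\dots\}$ with $e_i$ spanning the sphere $|z|=|x|$, expressing $\nu = (x\cdot\nu/|x|)\partial_r + (\text{tangential-to-sphere part})$, and computing $D^E_{e_i}\nu$ versus its $T$-image term by term; alternatively one can use that $g$ restricted to the sphere $S^{d-1}_{|x|}$ is $\alpha^{-2}$ times... no, $g$ on sphere directions equals $g_E$, only the radial direction is stretched — which is why the answer is so clean. Everything else is routine differentiation, and one should double-check the sign convention for $\mathrm{II}$ (the paper fixes it so spheres have positive mean curvature, consistent with $\mathrm{II}(X;Y)=g(D_X\nu,Y)$ with $\nu$ the \emph{outward} normal).
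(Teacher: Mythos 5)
Your step (1) rests on a claim that is false and in fact contradicts the very formula you are trying to prove: $T$ is \emph{not} an isometry between $(\mathbb{R}^d,g)$ and $(\mathbb{R}^d,g_E)$, and consequently the Levi-Civita connection of $g$ is \emph{not} the pull-back of the Euclidean one (step (2)). A direct computation of the differential of $T^{-1}(y)=|y|^{1/\alpha-1}y$ gives $(T^{-1})^*g_E=|y|^{2(1/\alpha-1)}\,g$, i.e.\ the pull-back of the Euclidean metric is only \emph{conformal} to $g$, with a non-constant conformal factor; equivalently, $T:(\mathbb{R}^d,g_E)\to(\mathbb{R}^d,g)$ is a conformal map with factor $|x|^{\alpha-1}$, not an isometry. (The fact that the unit normal happens to map to a unit normal does not make the map an isometry on the whole tangent space.) If your isometry claim were true, then $\mathrm{II}^g_{\partial\tilde\Omega}$ would simply be the transport of $\mathrm{II}_{\partial\Omega}$, with no prefactor $|x|^{\alpha-1}$ and no correction term, so steps (1)--(2) establish a statement incompatible with \eqref{II_II_g}; steps (3)--(4) then introduce exactly the conformal correction that the isometry claim would rule out, leaving the argument internally inconsistent.

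The repair is the two-step factorization used in the paper: set $\hat g:=(T^{-1})^*g_E$; then $T^{-1}:(\mathbb{R}^d,\hat g)\to(\mathbb{R}^d,g_E)$ \emph{is} an isometry, so $\mathrm{II}^{\hat g}_{\partial\tilde\Omega}$ at $y=T(x)$ is identified with $\mathrm{II}_{\partial\Omega}$ at $x$; and $g=|x|^{2(\alpha-1)}\hat g$ (using $|y|=|x|^\alpha$), so the standard conformal-change identity $\mathrm{II}^{e^{2\phi}\hat g}=e^{\phi}\bigl(\mathrm{II}^{\hat g}+\partial_{\nu_{\hat g}}\phi\,\hat g\bigr)$ with $\phi=(\alpha-1)\log|x|$ and $\partial_{\nu_{\hat g}}\phi=(\alpha-1)\,x\cdot\nu/|x|^2$ (computed by transporting the gradient back through the isometry) yields \eqref{II_II_g}. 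Your ``main obstacle'' paragraph already contains this formula, but it cannot be invoked until you have correctly identified \emph{which} pair of metrics is conformally related: the anisotropy of $T$ is entirely absorbed by the isometry onto $\hat g$, and what remains is purely conformal. Your alternative hands-on route (adapted frame, radial/spherical decomposition) could also be made to work, but you would then have to abandon the false identity $g(D^g_X\nu_g,Y)=g_E(D^{E}_{X'}\nu',Y')$ and carry the Christoffel terms of $g$ explicitly.
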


\begin{proof}
Let $T^{-1}$ be the inverse of $T$, i.e.
$$
T^{-1} y = |y|^{\frac{1}{\alpha} -1} y \,.
$$
We first notice that $g$ is conformal to the pull-back metric $(T^{-1})^*g_E$, where $g_E$ denotes the Euclidean metric. Indeed, by letting $y=Tx$ and 
$$
\hat g : = (T^{-1})^*g_E  \,,
$$
we have that
$$
\hat g = |y|^{2\left(\frac{1}{\alpha}-1\right)} g\,, 
$$
where we recall that 
$$
g_{ij} =  \delta_{ij} + \left(\frac{1}{\alpha^2} -1 \right) \frac{y_iy_j}{|y|^2}  \,.
$$
Indeed,  in coordinates we have 
$$
g_E=\sum_i dx_i\otimes dx_i\, , 
$$
and 
\begin{equation}\label{metric_pb}
\hat g=(T^{-1})^*g_E=\sum_i d(T^{-1}y)_i\otimes d(T^{-1}y)_i\, . 
\end{equation}
Since
\begin{equation*}
d(T^{-1}y)_i=|y|^{\frac{1}{\alpha} -1} dy_i+\left(\frac{1}{\alpha}-1\right)\sum_j|y|^{\frac{1}{\alpha} -3}y_jy_i dy_j
=\sum_j|y|^{\frac{1}{\alpha} -1} \left[\delta_{ij}+ \left(\dfrac{1}{\alpha}-1\right)\dfrac{y_iy_j}{\vert y\vert^2}\right] dy_j\, ,
\end{equation*}
from \eqref{metric_pb} we have 
\begin{align*}
\hat g &=  |y|^{2\left(\frac{1}{\alpha} -1\right)}\left[\delta_{ij}+ \left(\dfrac{1}{\alpha}-1\right)\dfrac{y_iy_j}{\vert y\vert^2}\right] \left[\delta_{ik}+ \left(\dfrac{1}{\alpha}-1\right)\dfrac{y_iy_k}{\vert y\vert^2}\right] dy_j\otimes dy_k \\
&=  |y|^{2\left(\frac{1}{\alpha} -1\right)}\left[\delta_{jk}+ \left(\frac{1}{\alpha^2}-1\right)\frac{y_j y_k}{\vert y\vert^2} \right] dy_j\otimes dy_k \\
&= |y|^{2\left(\frac{1}{\alpha} -1\right)}g_{jk} dy_j\otimes dy_k \,.
\end{align*}

Since $T^{-1}$ is an isometry between $(\R^d,\hat g)$ and $(\R^d, g_E)$, then the second fundamental forms $\mathrm{II}_{\partial \tilde \Omega }^{\hat g}$ and $\mathrm{II}_{\partial \Omega }$ are equivalent, and \eqref{II_II_g} follows by writing how the second fundamental form changes under the conformal transformation
$$
g= |y|^{-2\left(\frac{1}{\alpha}-1\right)} \hat g \,.
$$
For simplicity we write 
\begin{equation}\label{conforme}
g= \phi^k \hat g \,, \quad \text{ where $\phi(y):=\vert y\vert$ and $k=-2\left(\frac{1}{\alpha}-1\right)$}\, . 
\end{equation}
It is known that, by operating a conformal change of metric as in \eqref{conforme}, the second fundamental form becomes
\begin{equation}\label{sff}
\mathrm{II}_{\partial \tilde \Omega }^{g}=\phi^{\frac{k}{2}}\left(\mathrm{II}_{\partial \tilde \Omega }^{\hat{g}}+ \frac{k}{2}\frac{\nabla^{\hat{g}}_{\nu_{\hat{g}}}\phi}{\phi}\hat{g}\right)\, ,
\end{equation}
where $\nu_{\hat{g}}$ denotes the outward unit normal in the metric $\hat g$ at the point $y=T(x)$. Since $\hat g = (T^{-1})^*g_E$ then
$$
\nabla^{\hat{g}}_{\nu_{\hat{g}}}\phi=\hat{g}(\nabla^{\hat{g}}\phi,\nu_{\hat{g}})=(T^{-1})^*g_E(\nabla^{\hat{g}}\phi,\nu_{\hat{g}})=g_E((T^{-1})_*\nabla^{\hat{g}}\phi, (T^{-1})_*\nu_{\hat{g}})=g_E(\nabla(\phi\circ T),\nu)
$$
(see \cite[Section 5]{Lee}). From \eqref{conforme} we get 
$$
\nabla^{\hat{g}}_{\nu_{\hat{g}}}\phi (y)=g_E(\nabla(\phi(\vert x\vert^{{\alpha}-1}x)),\nu)=g_E(\nabla \vert x\vert^{\alpha},\nu)=\alpha |x|^{\alpha-1} g_E\left( \frac{x}{|x|}, \nu \right)\, .
$$
Summing up, since $|y|=|x|^\alpha$, then \eqref{sff} becomes
$$
\mathrm{II}_{\partial \tilde \Omega }^{g}=\vert x\vert^{\alpha-1}\left(\mathrm{II}_{\partial \Omega}+ (\alpha-1) \frac{1}{|x|} g_E\left( \frac{x}{|x|}, \nu \right) g_E \right)\, ,
$$
where we used the fact that $T^{-1}$ is an isometry and $\mathrm{II}_{\partial \Omega}$ and $\mathrm{II}^{\hat g}_{\partial \tilde \Omega }$ are equivalent. 
\end{proof}

Lemma \ref{lemma_diff_geo} has some relevant consequences. Indeed, if we consider a domain $\Omega$ which is convex in the Euclidean space, then the set $\tilde \Omega = T(\Omega)$ is not necessarily convex with respect to the new metric $g$, unless it satisfies the condition \eqref{condTeo1} (see also Section \ref{sect_remarks} for more details).

We first prove Theorem \ref{teo2}, where $\Omega$ is a ball centered at the origin.

\begin{proof}[Proof of Theorem \ref{teo2}]
We first notice that convexity is preserved whenever $\Omega=B_R$ is a ball of radius $R$ centered at the origin, since in this case we have
$$
\mathrm{II}_{\partial  \tilde B_R}^g = R^{\alpha-1} \left(\mathrm{II}_{\partial B_R} + (\alpha-1) \frac{x \cdot \nu}{|x|^2} g_E \right)= \alpha R^{\alpha-2}  g_E  \,. 
$$
Hence, in this case, we have $\tilde \Omega=B_{\tilde R}$, with $\tilde R=R^\alpha$, and from Lemma \ref{lemma_diff_geo} we obtain that $\tilde \Omega$ is convex in the metric $g$.

Let $u$ be the solution of \eqref{pb_palla} and let $v$ be given by \eqref{def_v} which is a solution of \eqref{pb_general}.  We note that	
From \eqref{prop_integr_ineq} we have 
\begin{equation}\label{k<0}
\int_{B_{\tilde R}}|x|^{n-d}v^{1-n}\textsf{k}[v]\,
\, dx \leq 0.
\end{equation}
Hence, by \cite[Lemmas 5.1 and 5.2 and Remark 4]{DEL}, 
\begin{multline}\label{k>0}
\int_{B_{\tilde R}}|x|^{n-d}v^{1-n}\textsf{k}[v] \,dx\geq \alpha^2 \left (1-\frac 1 n \right) \int_{B_{\tilde R}}|x|^{n-d}v^{1-n} \left[ v''-\frac{v'}{r} - \frac{\Delta_\omega v}{\alpha^2 (n-1)r^2}\right]^2\, dx\\
+2\alpha^2\int_{B_{\tilde R}}\frac{1}{r^2} \left |\nabla_\omega v'-\frac{\nabla_\omega v}{r}\right |^2\, dx +(n-2)\left (\frac{d-1}{n-1}-\alpha^2 \right)\int_{B_{\tilde R}}|x|^{n-d}v^{1-n}\frac{1}{r^4}|\nabla_\omega v|^2\,dx.
\end{multline}
In particular, from \eqref{alpha_optimal} $$\int_{B_{\tilde R}}|x|^{n-d}v^{1-n}\textsf{k}[v]\, dx\geq 0.$$ Therefore,  $\textsf{k}[v]=0$ by \eqref{k<0}. From \eqref{k>0} we get 
$$
\nabla_\omega v=0 \quad \text{and} \quad v''-\frac{v'}{r}=v''-\frac{v'}{r} - \frac{\Delta_\omega v}{\alpha^2 (n-1)r^2}=0 \, . 
$$ 
These conditions imply that either 
$$v(r,\omega)=c+\frac{r^2}{\alpha^2\lambda}, \qquad 0\leq r<\tilde R\, , \omega \in \mathbb{S}^{d-1}\, ,
$$ 
for some positive constants $\lambda$ and $c$ or $v$ is constant.  
We exclude the first case thanks to the boundary condition in problem \eqref{pb_general}, by observing that 
$$
g(\nabla v,\nu_g)=\alpha\partial_r v=\frac{2}{\lambda\alpha} r \neq 0 \quad \text{ on $\partial\tilde\Omega$}\,.
$$
Hence we conclude that $v$ must be constant and the same holds for $u$.
\end{proof}

In the case of Theorems \ref{teo1} and \ref{teo_Riem} we cannot make use of \cite[Corollary 5.4]{DEL} since the domain is not a ball and the integral inequalities on $\partial B_r$ used in \cite[Corollary 5.4]{DEL} cannot be considered up to the boundary of $\tilde \Omega$. In this case, we exploit a pointwise estimate and prove that $\textsf{k}[v] \geq 0$, which holds under the stronger assumption on $\alpha$ given by \eqref{hp_alfa}.

\begin{proof}[Proof of Theorem \ref{teo_Riem}]
	
Let $w$ be the solution of \eqref{pb_Riem} and $v$ be given by \eqref{def_v}, which is a solution of \eqref{pb_general}. By contradiction, let us assume that $w$ (and hence $v$) is not constant. Since $v$ is not constant, from Proposition \ref{lemma_integr3}, \eqref{Phi_def} and by using the convexity of $\tilde \Omega$, we have that ${k}[v] = 0$ and $\Phi' (v^{-\frac{n-2}{2}}) = 0$, which imply that the Hessian of $v$ is a multiple of the metric $g$ and $f(v)=c v^{\frac{n+2}{n-2}}$ for some constant $c$, respectively.

	By Cauchy-Schwarz inequality,  
	\begin{equation}\label{CS}
	|H_v|^2\geq \frac{1}{d}(\Delta_g v)^2 \,,
	\end{equation}
 \eqref{ric_g}, \eqref{H_g} and the fact that $L v=\Delta_g v +(n-d)g(\nabla \log \vert x\vert,\nabla v)$,  we have 
\begin{equation*}
\begin{split}
\textsf{k}[v]& =|H_v|^2
	+\ric_g(\nabla v,\nabla v)+ H(\nabla v,\nabla v)-\frac{1}{n}(Lv)^2\\ 
	& \geq \frac{d-2-\alpha^2(n-2)}{|x|^2}\left( |\nabla v|^2-\frac{(\nabla v \cdot x)^2 }{|x|^2} \right )\geq 0
\end{split}
\end{equation*}
by \eqref{hp_alfa}. Hence, from \eqref{hp_alfa} we obtain that  $v$ is radial, which implies that $v$ is quadratic
$$
v(r,\omega)=c+\frac{r^2}{\alpha^2\lambda} \,.
$$ 
From the boundary condition $g(\nabla w, \nu_g)=0$ on $\partial \tilde\Omega$ we get a contradiction.
\end{proof}

\medskip 

As we already mentioned in the introduction, Theorem \ref{teo1} is a straightforward consequence of Theorem \ref{teo_Riem}.

\begin{proof}[Proof of Theorem \ref{teo1}]
Since \eqref{condTeo1} is in force, then $\tilde \Omega$ is convex with respect to the metric $g$ and Theorem \ref{teo1} immediately follows from Theorem \ref{teo_Riem}.
\end{proof}

%
%
%
%

\section{Further remarks} \label{sect_remarks}
In this section we exploit the geometric condition \eqref{condTeo1} given in Theorem \ref{teo1}, which we recall it is given by  
\begin{equation*} 
\mathrm{II}_{\partial \Omega}\geq (1-\alpha) \frac{x \cdot \nu}{|x|^2}  \,,
\end{equation*}
and provide some example. 

\begin{lemma}  \label{lemma_exB}
Let $\Omega=B_R(x_0)$ and $0<\alpha \leq 1$. Then $\Omega$ satisfies \eqref{condTeo1} if and only if $ |x_0| \leq \alpha R$ or $|x_0|>R$.
\end{lemma}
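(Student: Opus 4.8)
The plan is to parametrize the boundary $\partial B_R(x_0)$ explicitly and reduce the tensorial inequality \eqref{condTeo1} to a scalar inequality in one geometric parameter, namely the distance of a boundary point from the origin. First I would recall that for a round sphere $\partial B_R(x_0)$ the (Euclidean) second fundamental form, with the sign convention of the paper (spheres have positive mean curvature), is $\mathrm{II}_{\partial\Omega}=\tfrac1R\,g_E$ on the tangent space, and the outward unit normal at a point $x\in\partial B_R(x_0)$ is $\nu=(x-x_0)/R$. Therefore $x\cdot\nu=\tfrac1R\,x\cdot(x-x_0)=\tfrac1R\big(|x|^2-x_0\cdot x\big)$. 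Since both sides of \eqref{condTeo1} are multiples of $g_E$ restricted to the tangent space, the condition \eqref{condTeo1} at $x$ is equivalent to the scalar inequality
\[
\frac1R \;\geq\; (1-\alpha)\,\frac{x\cdot\nu}{|x|^2}
\;=\;(1-\alpha)\,\frac{|x|^2-x_0\cdot x}{R\,|x|^2},
\]
i.e. (multiplying by $R|x|^2>0$, assuming $O\notin\partial\Omega$; the case $O\in\partial\Omega$ will be handled by a limiting/continuity remark) to $|x|^2\geq(1-\alpha)(|x|^2-x_0\cdot x)$, that is $\alpha|x|^2+(1-\alpha)\,x_0\cdot x\geq 0$ for every $x\in\partial B_R(x_0)$.

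Next I would turn this into an optimization over the sphere. Writing a generic boundary point as $x=x_0+R\omega$ with $\omega\in\mathbb S^{d-1}$, one computes $|x|^2=|x_0|^2+2R\,x_0\cdot\omega+R^2$ and $x_0\cdot x=|x_0|^2+R\,x_0\cdot\omega$, so
\[
\alpha|x|^2+(1-\alpha)\,x_0\cdot x
=|x_0|^2+R\,(1+\alpha)\,x_0\cdot\omega+\alpha R^2.
\]
The quantity $x_0\cdot\omega$ ranges over $[-|x_0|,|x_0|]$, so the minimum of the left-hand side over $\partial B_R(x_0)$ is
\[
m(\alpha):=|x_0|^2-R(1+\alpha)|x_0|+\alpha R^2=(|x_0|-R)(|x_0|-\alpha R).
\]
Hence \eqref{condTeo1} holds for $\Omega=B_R(x_0)$ if and only if $(|x_0|-R)(|x_0|-\alpha R)\geq 0$. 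Since $0<\alpha\le1$ gives $\alpha R\le R$, this product is nonnegative precisely when $|x_0|\le\alpha R$ or $|x_0|\ge R$. Finally I would observe that the boundary case $|x_0|=R$ (where $O\in\partial\Omega$ and the right-hand side of \eqref{condTeo1} has a singular-looking $x/|x|^2$ factor) is covered because there $m(\alpha)=0$ and the inequality degenerates; the strict case $|x_0|>R$ keeps $O\notin\overline\Omega$ so no regularity issue arises, and likewise $|x_0|<\alpha R$ forces $O\in\Omega$, consistently with Proposition \ref{propC}. This yields the stated equivalence: $\Omega$ satisfies \eqref{condTeo1} iff $|x_0|\le\alpha R$ or $|x_0|\ge R$.

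The only genuinely delicate point is the sign convention and the reduction of the tensor inequality to the scalar one: one must be sure that $\mathrm{II}_{\partial\Omega}$ and the right-hand side $(1-\alpha)\frac{x\cdot\nu}{|x|^2}g_E$ are compared as quadratic forms on the same tangent space and that, because both are scalar multiples of $g_E|_{T\partial\Omega}$, the inequality of forms is exactly the inequality of the scalar coefficients; this is where an off-by-sign error (the factor $x\cdot\nu$ can be negative when $O$ is "behind" the tangent plane) would propagate, so I would be careful to keep track that $x\cdot\nu$ need not have a fixed sign and that the claimed equivalence is an inequality that must hold at every boundary point, hence the passage to $\min_{\partial B_R(x_0)}$. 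Everything else is the elementary spherical computation above.
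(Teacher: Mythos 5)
Your proposal is correct and follows essentially the same route as the paper: both parametrize $\partial B_R(x_0)$ as $x=x_0+R\omega$, reduce \eqref{condTeo1} to a scalar inequality, and optimize over $\omega\in\mathbb{S}^{d-1}$. Your version clears the denominator $|x|^2$ first, so the worst case packages into the single factorization $(|x_0|-R)(|x_0|-\alpha R)\ge 0$ instead of the paper's two-case bound on $\frac{x\cdot\nu}{|x|^2}$; the only delicate point (which you correctly flag) is the degenerate boundary point $O\in\partial\Omega$ when $|x_0|=R$, a case the paper's own split between $|x_0|>R$ and $|x_0|<R$ also leaves aside.
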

\begin{proof}
The proof of this lemma follows by some simple computations. If $x_0=O$ then it is clear that \eqref{condTeo1} is equivalent to 
$$
\frac{1}{R} \geq \frac{1-\alpha}{R} 
$$ 
at any point on $\partial B_R$ and then \eqref{condTeo1} is satisfied since $0<\alpha \leq 1$.

If $x_0 \neq O$, then we write $x \in \partial B_R(x_0)$ by letting $x=x_0+R \omega$ with $\omega \in \mathbb{S}^{d-1}$, we have that
$$
(1-\alpha) \frac{x \cdot \nu}{|x|^2} = \frac{1-\alpha}{|x_0|} \frac{c+t}{1+c^2+2ct} \,,
$$
where we set $c=\frac{R}{|x_0|}$ and $t=\omega \cdot \frac{x_0}{|x_0|}$. Since $|t| \leq 1$, we find that
\begin{equation*}
 \frac{x \cdot \nu}{|x|^2}  \leq 
\begin{cases} 
\frac{1}{R+|x_0|}  & \text{ if } |x_0|>R \,, \\
\frac{1}{R-|x_0|}  & \text{ if } |x_0|<R \,.
\end{cases}
\end{equation*}
This implies that \eqref{condTeo1} is fulfilled if and only if $ |x_0| \leq \alpha R$ or $|x_0|>R$.
\end{proof}

We emphasize that Lemma \ref{lemma_exB} implies that a ball containing the origin and center $x_0$ does not satisfies \eqref{condTeo1} if $|x_0| > \alpha R$ and, in this case, Theorem \ref{teo1} does not apply. This example suggests that \eqref{condTeo1} is stronger than convexity, at least when $O \in \Omega$.

\begin{lemma} \label{lemmatto}
Let $0<\alpha \leq 1$ and $\Omega \subset \mathbb R^d$ be a bounded domain, with $d \geq 2$. If $\Omega$ satisfies \eqref{condTeo1} then $\Omega$ is convex. 
\end{lemma}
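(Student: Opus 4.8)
We treat the relevant case $O\in\Omega$ (for $O\notin\overline\Omega$ Proposition~\ref{propC} exhibits nonconvex domains satisfying \eqref{condTeo1}, so the statement is understood with $O\in\Omega$), and we may assume $0<\alpha<1$, the case $\alpha=1$ being immediate since \eqref{condTeo1} then reads $\mathrm{II}_{\partial\Omega}\ge0$, which forces convexity by the classical fact that a bounded domain whose smooth boundary has nonnegative second fundamental form is convex. The plan is to exploit the change of variables $T(x)=|x|^{\alpha-1}x$ and the metric $g$ of \eqref{g_def} already at our disposal, prove convexity of $\tilde\Omega=T(\Omega)$ in the Riemannian sense, and transfer the conclusion back. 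By Lemma~\ref{lemma_diff_geo}, at $y=T(x)$ one has $\mathrm{II}^g_{\partial\tilde\Omega}=|x|^{\alpha-1}\big(\mathrm{II}_{\partial\Omega}-(1-\alpha)\tfrac{x\cdot\nu}{|x|^2}g_E\big)$, so condition \eqref{condTeo1} is precisely the statement that $\partial\tilde\Omega$ has nonnegative second fundamental form in the metric $g$.

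Next I would record that $(\mathbb{R}^d\setminus\{O\},g)$ has nonnegative sectional curvature when $0<\alpha\le1$: in polar coordinates $g=\alpha^{-2}dr^2+r^2\,d\sigma^2$ with $d\sigma^2$ the round metric of the unit sphere, and then the $2$-planes tangent to the spheres $\{|x|=r\}$ have curvature $r^{-2}(1-\alpha^2)\ge0$, the radial ones being flat; equivalently, $g$ is the metric cone over the round sphere of radius $\alpha$, whose curvature $\alpha^{-2}\ge1$. Hence the Riemannian version of the classical fact quoted above applies in $(\mathbb{R}^d\setminus\{O\},g)$ — a bounded domain with smooth boundary of nonnegative second fundamental form in a nonnegatively curved space is geodesically convex — and since $\partial\tilde\Omega$ avoids the vertex $O\in\tilde\Omega$, we conclude that $\tilde\Omega$ is $g$-geodesically convex.

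Now $O=T(O)\in\tilde\Omega$, so the $g$-convex set $\tilde\Omega$ is $g$-star-shaped with respect to $O$; and the $g$-geodesics emanating from the vertex $O$ are exactly the Euclidean rays from $O$, so $\tilde\Omega$ contains the Euclidean segment $[O,y]$ for every $y\in\tilde\Omega$. Since $T$ maps each Euclidean ray from $O$ onto itself and $T([O,x])=[O,T(x)]$, the set $\Omega=T^{-1}(\tilde\Omega)$ is Euclidean star-shaped with respect to $O$; in particular $x\cdot\nu(x)\ge0$ for every $x\in\partial\Omega$. Inserting this into \eqref{condTeo1} and using $1-\alpha\ge0$ gives $\mathrm{II}_{\partial\Omega}\ge(1-\alpha)\tfrac{x\cdot\nu}{|x|^2}g_E\ge0$ on all of $\partial\Omega$, and the classical fact then yields that $\Omega$ is convex.

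The step I expect to require the most care is the use of ``nonnegative second fundamental form $\Rightarrow$ geodesically convex'' for $\tilde\Omega$ inside the metric $g$, which is smooth only away from the vertex $O$; one should either deduce it by a direct second-variation argument along $g$-geodesics meeting $\partial\tilde\Omega$ (the vertex lying harmlessly in the interior of $\tilde\Omega$), or invoke a convexity theorem for domains in nonnegatively curved Alexandrov spaces, and combine it with the elementary observations that $g$-geodesics from the vertex are Euclidean rays and that a geodesically convex set is star-shaped about any of its points. A more elementary route that avoids Riemannian geometry is to prove $x\cdot\nu\ge0$ on $\partial\Omega$ directly, by maximizing $x\mapsto|x|^2$ over the closure of $\{x\in\partial\Omega:\,x\cdot\nu(x)<0\}$: at an interior maximum $\bar x$ one has $\nu(\bar x)=-\bar x/|\bar x|$, and $\mathrm{Hess}^T(|x|^2)(\bar x)\le0$ forces $\mathrm{II}_{\partial\Omega}(\bar x)\le-|\bar x|^{-1}g_E$, contradicting \eqref{condTeo1} because $\alpha>0$; the subtle point of that approach is ruling out a maximum located on $\{x\cdot\nu=0\}$.
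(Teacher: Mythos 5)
Your endgame coincides with the paper's: once $x\cdot\nu\ge 0$ is known on all of $\partial\Omega$ (star-shapedness with respect to $O$), condition \eqref{condTeo1} together with $1-\alpha\ge0$ gives $\mathrm{II}_{\partial\Omega}\ge0$, and the classical theorem on compact hypersurfaces with nonnegative second fundamental form yields convexity. This is exactly how the paper concludes. The gap is that neither of your two routes actually proves $x\cdot\nu\ge0$, and that step is the entire content of the lemma. In the Riemannian route, the load-bearing claim is that a bounded domain whose boundary has nonnegative second fundamental form in a nonnegatively curved space is geodesically convex. That is not an off-the-shelf theorem: local convexity of a closed hypersurface implies global convexity in $\mathbb R^d$ by van Heijenoort/Sacksteder-type results, but these are nontrivial, do not transfer automatically to general nonnegatively curved ambient spaces, and here the ambient space is a metric cone whose singular vertex lies \emph{inside} $\tilde\Omega$, so one would need (and would have to prove) an Alexandrov-space version. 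Reformulating \eqref{condTeo1} as $\mathrm{II}^g_{\partial\tilde\Omega}\ge0$ via Lemma \ref{lemma_diff_geo} is a nice observation, but it only relocates the difficulty into an unproved geometric statement.

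The elementary route is closer in spirit to the paper's argument, but the case you flag as the ``subtle point'' is in fact the generic one, not an edge case. Your interior-maximum computation needs a point of the bad set $B=\{x\in\partial\Omega:\,x\cdot\nu<0\}$ at which $\nu=-x/|x|$, i.e.\ a critical point of $|x|^2$ restricted to $\partial\Omega$ lying in $B$; nothing forces such a point to exist, and if $|x|^2\big|_{\partial\Omega}$ has no critical point in $B$ the supremum over $\overline B$ is attained on $\{x\cdot\nu=0\}$, where the tangential gradient equals $2x\ne0$ and is perfectly consistent with a boundary maximum, so your second-order argument yields nothing there. The paper does its work precisely at such a point: it takes $x_0\in\partial\Omega$ with $x_0\cdot\nu(x_0)=0$ on the edge of the bad region, runs an arc-length curve $\gamma$ in $\partial\Omega$ in the radial tangent direction $\pm x_0/|x_0|$ into the region where $\gamma\cdot\nu(\gamma)>0$, and combines the Frenet/Weingarten relation for $\frac{d}{ds}\bigl(\gamma\cdot\nu(\gamma)\bigr)$ with the mean value theorem to produce a point where $\mathrm{II}(\gamma',\gamma')<0$ while $\gamma\cdot\nu>0$, contradicting \eqref{condTeo1}. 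Some one-dimensional argument of this kind (or an equivalent first-touching argument along $\{x\cdot\nu=0\}$) is what is missing; as written, both of your routes stop just short of the crux.
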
 

\begin{proof}
We first show that \eqref{condTeo1} implies that $\Omega$ is starshaped with respect to $O$, which then immediately implies that $\Omega$ is convex. 

By contradiction, assume that $\Omega$ contains the origin but it is not starshaped. Then there exists a point $x_0 \in \partial \Omega$ such that $x_o \cdot \nu(x_0) = 0$ and a curve $\gamma(s)$ defined in a neighborhood of $s=0$ and parametrized by arc-length such that $\gamma(0)=x_0$, 
$$
\gamma'(0)= -\frac{x_0}{|x_0|}
$$
and $\gamma(s) \cdot \nu(\gamma(s)) > 0$ for $s>0$. Moreover, according to Frenet formulas, we can choose $\gamma$ such that $\gamma''(s) = k(s) \nu(s) $ and $\nu'(s)=- k(s) \gamma'(s)$. Since $\gamma(0) \cdot \gamma'(0)= |\gamma (0)| > 0$ then, by continuity and for $s$ close to zero, we have that $\gamma(s) \cdot \nu(\gamma(s))>0$. By choosing $s>0$, the mean value theorem yields
$$
0 < \gamma(s) \cdot \nu(\gamma(s)) = \gamma(s) \cdot \nu(\gamma(s)) - \gamma(0) \cdot \nu(\gamma(0)) = - k(\bar s) \gamma(\bar s ) \cdot \gamma'(\bar s) s \,,
$$
which implies that there exists $\bar s>0$ such that $k(\bar s)<0$. Since $\gamma(\bar s) \cdot \nu(\gamma(\bar s)) > 0$ then \eqref{condTeo1} leads to a contradiction.

Hence $x \cdot \nu(x) >0$ and \eqref{condTeo1} implies that $\Omega$ is convex.
\end{proof}

We mention that, by considering a small perturbation of a ball, from Lemma \ref{lemma_exB} we easily infer that there exist domains satisfying \eqref{condTeo1} which differ from a ball. As stated in Proposition \ref{propC}, if $O \not \in \overline \Omega$ then we can prove that the domain does not need to be convex. We give an example in the proof of Proposition \ref{propC}.

\begin{proof}[Proof of Proposition \ref{propC}]
The first part of the assertion has been proved in Lemma \ref{lemmatto}. Regarding the second part of the assertion, we provide two examples, which are represented in Fig. \ref{Fig1}.

We consider a ball $B$ which does not contain the origin. We consider a domain $\Omega$ such that $\partial \Omega$ coincides with $\partial B$ at any point $x \in \partial B$ where $x \cdot \nu_{\partial B} (x)  \geq 0$. Hence Lemma \ref{lemma_exB} implies that $\partial \Omega$ satisfies \eqref{condTeo1} at those points. 

Then, we modify the lower spherical cap of $\partial B$, where $x \cdot \nu_{\partial B} (x) < 0$, in such a way that $\Omega$ remains convex and $\partial \Omega$ has a flat portion. It is clear that $\Omega$ satisfies \eqref{condTeo1} at these points since the second fundamental form is nonnegative definite and $x \cdot \nu_{\partial B} (x) < 0$. Thus $\Omega$ (as well as the ball $B$) provides an example of domain satisfying \eqref{condTeo1} which is convex and not containing the origin.

Starting from $\Omega$ we can easily build a non-convex domain by doing a small perturbation of the flat portion $\partial \Omega$, since $x \cdot \nu_{\partial \Omega} \leq c_0 < 0$ for some $c_0>0$ in that region, as the one in blue in Figure \ref{Fig1}.
\end{proof}

\begin{figure}
\includegraphics[width=0.35 \textwidth]{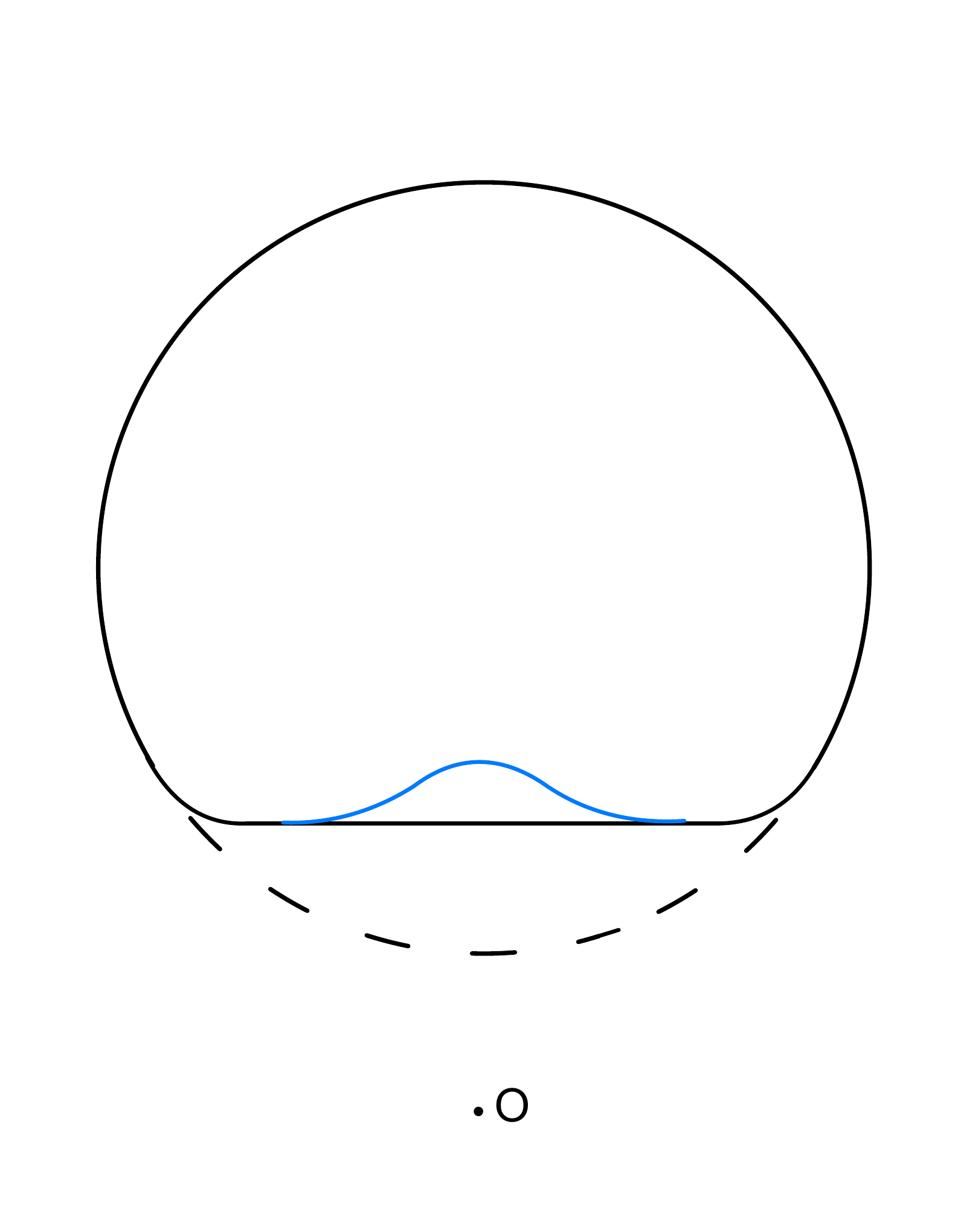}
\caption{Starting from a ball which does not contain the origin, we provide two nontrivial examples of domains satisfying \eqref{condTeo1}. The domain bounded by the black line is convex. The domain obtained with the small blue perturbation is not convex.}  \label{Fig1}
\end{figure}

\appendix

\section{Asymptotic estimates} \label{appendix}
In this appendix we prove Proposition \ref{prop_stime_cgen} which contains the regularity estimates at the origin needed to prove Proposition \ref{lemma_integr3}. The main ideas are taken from \cite{DEL}, but we prefer to give a proof of the results since some argument can be simplified in the setting that we are considering.

We recall that $u$ is a positive bounded weak solution to \eqref{eq_u_sec2}, and hence it satisfies 
\begin{equation} \label{u_weak}
\int_{ \Omega} |x|^{-bq}u^2 dx + \int_{ \Omega} |x|^{-2a}|Du|^2 dx < +\infty\,.
\end{equation}
By an extension argument and thanks to CKN inequalities, we have that $u$ is also such that
\begin{equation} \label{u_energy}
\int_{\R^d} |x|^{-bq}u^q dx < +\infty \,,
\end{equation}
with $q$ given by \eqref{parameters}. Thanks to the mapping $T:\mathbb{R}^d \to \mathbb{R}^d$, with $T(x)= |x|^{\alpha-1} x $, and by setting  
\begin{equation*} 
w(T(x)) = u(x)\,,
\end{equation*}
we have that $w$ satisfies \eqref{pb_w}, with $g\, \ L$ and $\tilde \Omega$ given by \eqref{g_inverse}, \eqref{f-Laplacian_def} and \eqref{tildeOmega_def}, respectively. From \eqref{u_weak} and \eqref{u_energy}, we also have that $w$ is such that
\begin{equation} \label{w_energy}
\int_{\tilde \Omega} w^q|x|^{n-d} dx + \int_{\tilde \Omega} |\nabla w|_g^2\, |x|^{n-d}dx < + \infty \,,
\end{equation}
As done in \eqref{def_v}, we set 
$$
v = w^{-\frac{2}{n-2}} 
$$
and we have that $v$ satisfies \eqref{pb_general}. Moreover, from \eqref{w_energy}, we also have 
\begin{equation} \label{v_energy}
\int_{\tilde \Omega} v^{-q \frac{n-2}{2} } |x|^{n-d} dx + \int_{\tilde \Omega} v^{-\frac{n}{2}}|\nabla v|_g^2\, |x|^{n-d}dx < + \infty \,.
\end{equation}

In order to prove Proposition \ref{prop_stime_cgen} we need a preliminary result. It will be convenient to use polar coordinates $(r,\omega)$, with $r=|x|$ and $\omega = x/|x|$. We let $R>0$ be such that $B_R\subset \tilde\Omega$, and  
we consider the Emden-Fowler transformation
\begin{equation} \label{uphi}
u(r,\omega)=r^{a-a_c}\varphi(s,\omega),  
\end{equation}
with
$$
0< r<R,\,\, s=-\log r,\,\, \omega\in \mathbb{S}^{d-1} \,.
$$ 
The function $\varphi$ satisfies
\begin{equation}\label{eq_phi}
-\partial_s^2 \varphi -\Delta_\omega \varphi +\Lambda \varphi = e^{-s(a_c+a-bq+2)}  f(e^{(a_c-a) s}\varphi)
\end{equation}
in 
\begin{equation*}
  \mathcal{C}:=[-\log R,+\infty ) \times \mathbb{S}^{d-1} \,,
\end{equation*}
with $\Lambda=(a-a_c)^2$. We notice that, after some simple calculations, \eqref{eq_phi} can be written as
\begin{equation}\label{eq_phi2}
-\partial_s^2 \varphi -\Delta_\omega \varphi +\Lambda \varphi = e^{-s(a_c-a) \frac{n+2}{n-2}}  f(e^{(a_c-a) s}\varphi)
\end{equation}
and by using the definition of $\Phi$ \eqref{Phi_def} we also have 
\begin{equation}\label{eq_phi3}
-\partial_s^2 \varphi -\Delta_\omega \varphi +\Lambda \varphi =  \Phi(e^{(a_c-a) s}\varphi)\varphi^\frac{n+2}{n-2} \,.
\end{equation}
We notice that, since 
\begin{equation} \label{ccc}
c^{-1} \leq u \leq c
\end{equation} 
in $\overline \Omega$ for some $c >0$, then 
\begin{equation} \label{Phi_bounded}
| \Phi(e^{(a_c-a) s}\varphi(s,\omega)) | \leq C
\end{equation}
for some positive constant $C$ and for any $(s,\omega) \in \mathcal C$. From \eqref{u_weak} and \eqref{u_energy} we also have that 
$$
\varphi \in H^1( \mathcal{C}) \,.
$$
We will denote by $\varphi'(s,\omega)$ and $\nabla_\omega \varphi (s,\omega)$ the derivative of $\varphi$ in the variable $s$ and the angular gradient of $\varphi$, respectively.

\begin{proposition} \label{stime_phi}
Let $\varphi$ be given by \eqref{uphi}. There exist two positive constants $C_1$ and $C_2$ such that	
\begin{equation} \label{bounds1}
C_1 e^{-\sqrt{\Lambda} s}\leq\varphi(s,\omega) \leq C_2 e^{-\sqrt{\Lambda} s}
\end{equation}
and
\begin{equation} \label{bounds2}
\vert\varphi'(s,\omega)\vert \, , \vert\varphi''(s,\omega)\vert\, , \vert\nabla_\omega \varphi(s,\omega)\vert\, \vert\nabla_\omega\varphi'(s,\omega)\vert\, , \vert\Delta_\omega\varphi(s,\omega)\vert\leq C_2 e^{-\sqrt{\Lambda} s}
\end{equation}
for any $(s,\omega)\in\mathcal{C}$.	
\end{proposition}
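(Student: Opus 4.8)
\emph{Step 1: the two-sided bound \eqref{bounds1}.} Since $a<a_c$ by \eqref{parameters}, one has $\sqrt{\Lambda}=\sqrt{(a-a_c)^2}=a_c-a>0$, and \eqref{uphi} with $r=e^{-s}$ reads $\varphi(s,\omega)=r^{a_c-a}u(r,\omega)=e^{-\sqrt{\Lambda}\,s}\,u(e^{-s},\omega)$. Thus \eqref{bounds1} is an immediate consequence of the two-sided bound \eqref{ccc} on $u$, with $C_1=c^{-1}$ and any $C_2\geq c$. The derivative estimates \eqref{bounds2} will be obtained by interior elliptic regularity, applied after a normalization of $\varphi$ on unit pieces of the cylinder; this is the scheme of \cite{DEL}, considerably simplified by the fact that here, unlike in the critical-Sobolev setting, we may use \eqref{ccc}.

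\emph{Step 2: reduction and rescaling.} It suffices to prove \eqref{bounds2} for $s\geq s_*$ with $s_*$ large: for $s$ in a bounded interval the radii $r=e^{-s}$ stay in a compact subset of the punctured neighborhood of the origin where $u$ solves the uniformly elliptic, smooth-coefficient equation \eqref{eq_u_sec2}, so interior elliptic estimates bound $u$ and all its derivatives, hence $\varphi=r^{a_c-a}u$ and its $(s,\omega)$-derivatives, and since $e^{-\sqrt{\Lambda}s}$ is bounded below on a bounded $s$-interval this is absorbed into \eqref{bounds2} after enlarging $C_2$. For $s_0$ large put
\[
\varphi_{s_0}(s,\omega):=e^{\sqrt{\Lambda}\,s_0}\,\varphi(s+s_0,\omega),\qquad (s,\omega)\in Q:=(-1,1)\times\mathbb{S}^{d-1}.
\]
From \eqref{eq_phi3}, using $\sqrt{\Lambda}=a_c-a$ and the identity $e^{(a_c-a)(s+s_0)}\varphi(s+s_0,\omega)=e^{\sqrt{\Lambda}s}\varphi_{s_0}(s,\omega)=u(e^{-(s+s_0)},\omega)$, one finds that $\varphi_{s_0}$ solves
\[
-\partial_s^2\varphi_{s_0}-\Delta_\omega\varphi_{s_0}+\Lambda\,\varphi_{s_0}=e^{-\tfrac{4\sqrt{\Lambda}}{n-2}\,s_0}\;\Phi\!\left(e^{\sqrt{\Lambda}s}\varphi_{s_0}\right)\varphi_{s_0}^{\frac{n+2}{n-2}}\qquad\text{in }Q.
\]
By \eqref{bounds1}, $C_1e^{-\sqrt{\Lambda}s}\leq\varphi_{s_0}\leq C_2e^{-\sqrt{\Lambda}s}$ on $Q$, so $\varphi_{s_0}$ is bounded between two positive constants on $Q$, uniformly in $s_0$; moreover $e^{\sqrt{\Lambda}s}\varphi_{s_0}=u\in[c^{-1},c]$ by \eqref{ccc}, so the argument of $\Phi$ lies in the compact interval $[c^{-1},c]$, on which $\Phi$ is $C^1$ because $f\in C^1$ (in particular bounded and Lipschitz, cf. \eqref{Phi_bounded}). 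Hence the right-hand side of the displayed equation is bounded on $Q$ uniformly in $s_0$ (and, since $n>2$, it even decays).

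\emph{Step 3: uniform elliptic estimates and conclusion.} The operator $-\partial_s^2-\Delta_\omega+\Lambda$ on the product cylinder is uniformly elliptic and invariant under translations in $s$, and $\mathbb{S}^{d-1}$ is a closed manifold, so all interior estimates below carry constants independent of $s_0$. First, $L^p$ (Calder\'on--Zygmund) estimates and Sobolev embedding give $\|\varphi_{s_0}\|_{C^{1,\beta}((-3/4,3/4)\times\mathbb{S}^{d-1})}\leq C$ uniformly; consequently the right-hand side of the equation for $\varphi_{s_0}$ lies in $C^{0,\beta}$ uniformly, being the product of $\varphi_{s_0}^{(n+2)/(n-2)}$ (a smooth function of $\varphi_{s_0}$, which is bounded away from $0$) with $\Phi(e^{\sqrt{\Lambda}s}\varphi_{s_0})$, a composition of the $C^{0,\beta}$ map $e^{\sqrt{\Lambda}s}\varphi_{s_0}$, valued in $[c^{-1},c]$, with $\Phi\in C^1([c^{-1},c])$. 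Schauder estimates then yield $\|\varphi_{s_0}\|_{C^{2,\beta}((-1/2,1/2)\times\mathbb{S}^{d-1})}\leq C$ uniformly. Evaluating at $s=0$ and undoing the scaling, so that $\partial_s\varphi_{s_0}(0,\cdot)=e^{\sqrt{\Lambda}s_0}\varphi'(s_0,\cdot)$, $\partial_s^2\varphi_{s_0}(0,\cdot)=e^{\sqrt{\Lambda}s_0}\varphi''(s_0,\cdot)$, $\nabla_\omega\varphi_{s_0}(0,\cdot)=e^{\sqrt{\Lambda}s_0}\nabla_\omega\varphi(s_0,\cdot)$, $\nabla_\omega\partial_s\varphi_{s_0}(0,\cdot)=e^{\sqrt{\Lambda}s_0}\nabla_\omega\varphi'(s_0,\cdot)$ and $\Delta_\omega\varphi_{s_0}(0,\cdot)=e^{\sqrt{\Lambda}s_0}\Delta_\omega\varphi(s_0,\cdot)$, we obtain \eqref{bounds2} for all large $s_0$. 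The only genuinely delicate point is this uniformity in $s_0$: it rests on the $s$-translation invariance of the cylinder operator together with the two-sided bound \eqref{ccc}, which simultaneously normalizes $\varphi_{s_0}$ to size $O(1)$ on $Q$ and keeps the possibly singular nonlinearity $\Phi$ away from its singularity at the origin; without it, one would have to run the more delicate mode-by-mode ODE analysis of \cite{DEL}.
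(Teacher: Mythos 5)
Your proof is correct and follows essentially the same route as the paper: \eqref{bounds1} is read off from the two-sided bound \eqref{ccc} via the Emden--Fowler change of variables, and \eqref{bounds2} comes from translation-invariant interior elliptic ($L^p$/Schauder) estimates on unit pieces of the cylinder, using \eqref{ccc} to keep the nonlinearity under control. The only difference is presentational: you spell out the normalization $\varphi_{s_0}=e^{\sqrt{\Lambda}s_0}\varphi(\cdot+s_0,\cdot)$ explicitly, whereas the paper compresses this into citations of local estimates from Gilbarg--Trudinger.
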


\begin{proof}
We first notice that $u > 0 $ in $ \overline \Omega$, since $u$ cannot vanish at the boundary by Hopf's lemma. Hence, there exists a positive constant $c_*$ such that $0<c_*^{-1} \leq u (x) \leq c_*$ for any $x \in \overline \Omega$. Thus \eqref{bounds1} immediately follows from \eqref{uphi}.

By a localized boot-strap argument (see e.g. \cite[Corollary 7.11, Theorem 8.10, and Corollary 8.11]{GT}) we obtain the $C^{\infty}$ regularity. From local $C^{1,\alpha}$ estimates (see e.g. \cite[Theorem 8.32, p. 210]{GT}) we get that all the first derivatives of $\varphi$ converge to $0$ with rate $e^{-\sqrt{\Lambda}s}$ as $s\rightarrow+\infty $. Moreover, from local $W^{k+2,2}$ estimates (see e.g.  \cite[Theorem 8.10, p. 186]{GT}) we get estimates of order $e^{-\sqrt{\Lambda}s}$, for $s$ large enough.  Finally, the assertion follows from \cite[Corollary 7.11, Theorem 8.10, and Corollary 8.11]{GT} and by taking $k$ large enough.
\end{proof}

We are ready to prove Proposition \ref{prop_stime_cgen}.

\begin{proposition*}	
Let $v$ be given by \eqref{def_v} and let $\tilde \Omega$ be a bounded domain containing the origin. Let $R>0$ be such that $B_R\subset \tilde\Omega$. If $\alpha \leq \alpha_{FS}$, then we have 
	\begin{enumerate}
		\item $\int_{\mathbb{S}^{d-1}}  |v'(r,\omega)|^2 d\sigma \leq   O(1)$,
		\item $\int_{\mathbb{S}^{d-1}}  |\nabla_\omega v(r,\omega)|^2 d\sigma\leq  O(r^2)$,
		\item $\int_{\mathbb{S}^{d-1}}  |\nabla_\omega v'(r,\omega)|^2 d\sigma \leq   O(1)$,
		\item  $\int_{\mathbb{S}^{d-1}}  |\nabla_\omega v'(r,\omega)-\frac 1 r \nabla_\omega v(r,\omega)|^2 d\sigma \leq O(1)$ 
	\end{enumerate}
as $r\to 0^+$.
\end{proposition*}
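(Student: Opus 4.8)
The plan is to translate the four estimates into statements about the Emden--Fowler variable $\varphi$ from \eqref{uphi} and then invoke Proposition \ref{stime_phi}. Recall $v=w^{-2/(n-2)}$ with $w(T(x))=u(x)$, $T(x)=|x|^{\alpha-1}x$. First I would write everything in polar coordinates on the $w$-side: if $x=|x|\omega$ and we set $r=|x|$, then $T$ acts radially, $T(r\omega)=r^{\alpha}\omega$, so $w$ at radius $r^{\alpha}$ corresponds to $u$ at radius $r$; equivalently, writing $\tilde r$ for the radial variable in $\tilde\Omega$ and $r=\tilde r^{1/\alpha}$, the chain rule relates $\partial_{\tilde r}w$ and $\nabla_\omega w$ to $\partial_r u$ and $\nabla_\omega u$ with powers of $r$ and factors of $1/\alpha$. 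The point is that $v=w^{-2/(n-2)}$, $w$ is bounded above and below away from $0$ (since $0<c_*^{-1}\le u\le c_*$), so $v$ and all its derivatives are comparable to $w$ and its derivatives of the same order; hence it suffices to prove the analogues of (1)--(4) for $w$ in the $\tilde r$ variable, which in turn follow from bounds on $u$ in the $r$ variable.

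Next I would substitute the Emden--Fowler form $u(r,\omega)=r^{a-a_c}\varphi(s,\omega)$ with $s=-\log r$ and compute $\partial_r u$, $\nabla_\omega u$, $\partial_r^2 u$, $\nabla_\omega\partial_r u$ in terms of $\varphi,\varphi',\varphi'',\nabla_\omega\varphi,\nabla_\omega\varphi'$ and powers $r^{a-a_c-1}$, $r^{a-a_c-2}$ etc., together with the conversion $\partial_r=-\frac1r\partial_s$. Proposition \ref{stime_phi} gives $\varphi$ and all the listed derivatives bounded by $C_2e^{-\sqrt\Lambda s}=C_2 r^{\sqrt\Lambda}$, and since $\sqrt\Lambda=|a-a_c|=a_c-a>0$ (recall $a<a_c$), the combinations $r^{a-a_c}\cdot r^{\sqrt\Lambda}=O(1)$, $r^{a-a_c-1}\cdot r^{\sqrt\Lambda}=O(r^{-1})$, and so on. Carrying the powers through, one gets pointwise bounds $|\nabla_\omega u|=O(1)$-type statements; then integrating over $\mathbb S^{d-1}$ (a compact set, so no extra factors) and converting back to $v$ in the $\tilde r=r^\alpha$ variable produces exactly (1)--(4) — the extra $r^2$ in (2) coming from the two powers of $1/\tilde r$ in $|\nabla_\omega v|^2_g=\tilde r^{-2}|\nabla_\omega v|^2$ being reabsorbed, and the cancellation in (4) reflecting that the leading radial behaviour of $\nabla_\omega v'$ and $\frac1{\tilde r}\nabla_\omega v$ is the same homogeneity so their difference is no worse than either term.

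The main obstacle, and the only genuinely delicate point, is the bookkeeping of homogeneity exponents through the composition with $T$: one must check that the condition $\alpha\le\alpha_{FS}$ (equivalently the admissible range of $a$, hence the sign and size of $\sqrt\Lambda=a_c-a$) is exactly what makes each of the four combinations of powers non-negative (or, in (4), makes the would-be singular terms cancel to leading order). Concretely, estimate (4) requires more than crude triangle-inequality bounds on $\nabla_\omega v'$ and $\frac1r\nabla_\omega v$ separately: one has to expand $\nabla_\omega v'-\frac1r\nabla_\omega v$ in terms of $\varphi$ and see that the top-order-in-$r^{-1}$ contributions match and subtract off, leaving a quantity controlled by $\nabla_\omega\varphi'-\nabla_\omega\varphi$ (or similar) times a bounded power of $r$, which Proposition \ref{stime_phi} then bounds. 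Everything else is a routine, if somewhat lengthy, computation using the explicit form of $T$, the relation $v=w^{-2/(n-2)}$ with $w$ bounded away from $0$ and $\infty$, and the exponential decay estimates \eqref{bounds1}--\eqref{bounds2}.
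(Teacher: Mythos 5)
Your reduction from $v$ to $w$ to $u$ and the passage to the Emden--Fowler variable $\varphi$ match the paper's setup, but the core of your plan --- ``carry the powers through'' using the bounds of Proposition \ref{stime_phi} and check that the homogeneity exponents come out non-negative --- does not close, and not only for item (4). Proposition \ref{stime_phi} gives $\varphi,\varphi',\nabla_\omega\varphi,\dots=O(e^{-\sqrt\Lambda s})$ together with $\varphi\ge C_1e^{-\sqrt\Lambda s}$, so the ratios $\varphi'/\varphi$, $\nabla_\omega\varphi/\varphi$, etc.\ are merely $O(1)$. Plugging this into the exact formula
\begin{equation*}
v'(r,\omega)=-\tfrac{2}{\alpha(n-2)}\,w^{-\frac{2}{n-2}}\Bigl(a-a_c-\tfrac{\varphi'(s/\alpha,\omega)}{\varphi(s/\alpha,\omega)}\Bigr)\tfrac1r
\end{equation*}
yields only $|v'|=O(1/r)$, not $O(1)$; likewise $|\nabla_\omega v|=O(1)$ rather than $O(r)$, and similarly for (3) and (4). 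Every one of the four estimates is short by exactly one power of $r$ if you use only the crude exponential bounds, and no amount of bookkeeping of the exponents through $T$ recovers it.

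The missing ingredient is a set of refined $L^2(\mathbb{S}^{d-1})$ asymptotics showing that the relevant ratios converge to their radial limits at rate $O(e^{-s})$ in $L^2$ of the sphere: namely $\int_{\mathbb{S}^{d-1}}|a-a_c-\varphi'/\varphi|^2\,d\sigma\le O(e^{-2s})$, $\int_{\mathbb{S}^{d-1}}|\nabla_\omega\varphi/\varphi|^2\,d\sigma\le O(e^{-2s})$, and the analogous estimate for the combination appearing in (4). These are not algebraic cancellations readable off from Proposition \ref{stime_phi}; they are proved (following Dolbeault--Esteban--Loss) by subtracting the spherical average $\varphi_0(s)=\int_{\mathbb{S}^{d-1}}\varphi\,d\sigma$, setting $\Psi=e^{\sqrt\Lambda s}(\varphi-\varphi_0)$, deriving a differential inequality for $\chi_1(s)=\tfrac12\int_{\mathbb{S}^{d-1}}|\partial_s\Psi|^2\,d\sigma$ via the Poincar\'e inequality on $\mathbb{S}^{d-1}$ (with constant $\lambda_1=d-1$), and running a comparison argument for $\zeta_1=\sqrt{\chi_1}$. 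It is precisely at this last step that the hypothesis $\alpha\le\alpha_{FS}$ is used, to conclude $\zeta_1(s)\le O(e^{-\alpha s})$ and hence the $O(e^{-2s})$ decay after rescaling $s\mapsto s/\alpha$ --- not, as you suggest, to make ``each of the four combinations of powers non-negative.'' Without this spectral-gap/ODE argument your proof has no mechanism by which the Felli--Schneider condition enters, which is a sign the plan as stated cannot be completed.
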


\begin{proof}
	The proof of this proposition is essentially the same of \cite[Proposition 8.2]{DEL}, even if in our setting we can simplify some argument  (due to the fact that for us $u$ bounded and strictly positive in $\overline \Omega$).
	
By an abuse of (evident) notations, when passing to polar coordinates we will write $v(x)=v(r,\omega)$. We recall that 
$$
v(r,\omega) = w(r,\omega)^{-\frac{2}{n-2}}\, \quad \text{and} \quad  w(r,\omega) = u(r^{\frac{1}{\alpha}}, \omega) = r^{\frac{a-a_c}{\alpha}} \varphi\left(\frac{s}{\alpha}, \omega \right) \,,
$$	
where $u$ is the solution to \eqref{eq_u_sec2} and $w,v$ and $\varphi$ are given by \eqref{w_u}, \eqref{def_v} and \eqref{uphi}, respectively. Straightforward calculations give
$$
v'(r,\omega)= - \frac{2}{\alpha(n-2)} w(r,\omega)^{-\frac{2}{n-2}}  \left( a-a_c - \frac{\varphi'\left(\frac{s}{\alpha}, \omega \right)}{\varphi\left(\frac{s}{\alpha}, \omega \right)} \right) \frac{1}{r} \,.
$$
From \eqref{ccc} we obtain that $w$ is bounded and Proposition \ref{stime_phi} yields
\begin{equation} \label{st1}
|v'(r,\omega)| \leq  e^{s} O \left( \bigg{|} a-a_c - \frac{\varphi'\left(\frac{s}{\alpha}, \omega \right)}{\varphi\left(\frac{s}{\alpha}, \omega \right)} \bigg{|} \right) 
\end{equation}
and analogously
\begin{equation} \label{st2}
|\frac{1}{r} \nabla_\omega v(r,\omega)| \leq  e^{s} O \left( \bigg{|}  \frac{\nabla_\omega \varphi\left(\frac{s}{\alpha}, \omega \right)}{\varphi\left(\frac{s}{\alpha}, \omega \right)} \bigg{|} \right) 
\end{equation}
as $r \to  0$. 
%
Since		
\begin{equation*}
	 \nabla_\omega v'(r,\omega)-\frac{1}{r} \nabla_\omega v(r,\omega) =  \frac{2}{\alpha(n-2)r} w(r,\omega)^{-\frac{2}{n-2}}   \left (  \frac{\nabla_\omega \varphi'(\frac s \alpha,\omega)}{\varphi(\frac s \alpha,\omega)} - \frac{n}{n-2} \frac{\varphi'(\frac s \alpha,\omega) \nabla_\omega \varphi(\frac s \alpha,\omega)}{\varphi(\frac s \alpha,\omega)^2} \right ),
	\end{equation*}
and since $w$ is bounded, then Proposition \ref{stime_phi} implies
\begin{equation}\label{st4}
	\left | \nabla_\omega v'(r,\omega)-\frac{1}{r} \nabla_\omega v(r,\omega)\right |\leq  e^s  O\left (\left |  \frac{\nabla_\omega \varphi'(\frac s \alpha,\omega)}{\varphi(\frac s \alpha,\omega)} - \frac{n}{n-2} \frac{\varphi'(\frac s \alpha,\omega) \nabla_\omega \varphi(\frac s \alpha,\omega)}{\varphi(\frac s \alpha,\omega)^2} \right | \right ),
	\end{equation}
which holds uniformly with respect to $\omega$. 

Now, our goal is to prove the following asymptotic expansions, which immediately imply the assertion of the proposition.
	\begin{enumerate}
		\item[(i)] $\int_{\mathbb{S}^{d-1}}  \left |\sqrt{\Lambda}-\frac{ \varphi '(\frac s \alpha,\omega)}{ \varphi(\frac s \alpha,\omega)}\right |^2 dx \leq O(e^{-2 s})$;
		\item[(ii)] $\int_{\mathbb{S}^{d-1}}  \left | \frac{\nabla_\omega  \varphi(\frac s \alpha,\omega)}{ \varphi(\frac s \alpha,\omega) }\right |^2 dx \leq O(e^{-2 s})$;
		\item[(iii)] $\int_{\mathbb{S}^{d-1}}  \left |\frac{n}{n-2} \frac{\varphi'(\frac s \alpha,\omega) \nabla_\omega \varphi(\frac s \alpha,\omega)}{\varphi(\frac s \alpha,\omega)^2}-\frac{\nabla_\omega \varphi'(\frac s \alpha,\omega)}{\varphi(\frac s \alpha,\omega)}\right |^2 dx \leq O(e^{-2 s})$;
	\end{enumerate}
	as $s\to +\infty$. The proof of (i)-(iii) strictly follows the proof of \cite[Proposition 8.2]{DEL}, with only minor changes. For this reason, we give a more detailed sketch of the proof for (i), and we omit the proofs of (ii) and (iii) which can be obtained in a similar manner. 
		
	{\it Proof of} (i). Let us consider a positive solution $\varphi$ to \eqref{eq_phi} and define on $(\log R, +\infty)$ the
	function
	$$
	\varphi_0(s)=\int_{\mathbb{S}^{d-1}}  \varphi(s,\omega)d\sigma,
	$$
	which is a solution of 
	$$
	-\varphi_0''+\Lambda \varphi_0 = \int_{\mathbb{S}^{d-1}}  \Phi(e^{(a_c-a) s}\varphi)\varphi^\frac{n+2}{n-2} d\sigma, \quad \text{in } (\log R, +\infty);
	$$
from \eqref{Phi_bounded}	and \eqref{bounds1} we obtain that $\varphi_0(s) \sim e^{-\sqrt{\Lambda }s}$ as $s\to +\infty$. We define 
$$
\Psi(s, \omega)=e^{\sqrt{\Lambda}s}(\varphi(s,\omega)-\varphi_0(s))
$$ 
and notice that	$\Psi$ solves
\begin{equation} \label{eq_Psi}
-\partial_s^2 \Psi - \Delta_\omega \Psi - 2 \sqrt{\Lambda} \partial_s \Psi = H \,,  
\end{equation}
where $H$ satisfies 
$$
|H| \leq O(e^{-2\alpha s}) \,,
$$
as $s \to + \infty$. Now, by arguing as in \cite[p. 433]{DEL}, we obtain  
	\begin{equation}
	\label{dim_i}
	\left |\frac{\partial_s \varphi(s,\omega)}{\varphi(s,\omega)}-\sqrt{\Lambda}\right |\leq C |\partial_s \Psi(s,\omega)|+O(e^{-2\alpha s}) \quad s\to +\infty \,,
	\end{equation}
and hence (i) is proved once we have the asymptotic behaviour of 	
	\begin{equation} \label{chi1def}
	\chi_1(s):=\frac{1}{2}\int_{\mathbb{S}^{d-1}} |\partial_s \Psi|^2 d\sigma \,.
	\end{equation}
In order to do this, by closely following the proof of \cite[Proposition 8.2]{DEL}, we find that $\chi_1$ satisfies 
	\begin{equation}
	\label{eqchi1}
	-\chi_1''+\frac{|\chi_1'|^2}{2\chi_1}+2\lambda_1 \chi_1 -2 \sqrt{\Lambda} \chi_1'\leq h_1
	\end{equation}
	where 
	$$
	h_1= \int_{\mathbb{S}^{d-1}} \partial_s H \partial_s \psi d \sigma \,,
	$$
	and $\lambda_1=d-1$ is the constant in the Poincaré inequality
	$$
	\int_{\mathbb{S}^{d-1}} |\nabla_\omega(\partial_s \Psi)|^2d\sigma \geq \lambda_1 \int_{\mathbb{S}^{d-1}} |\partial_s \Psi|^2 d\sigma,
	$$
	which holds since 
	$$
	\int_{\mathbb{S}^{d-1}} \partial_s \psi d\sigma = 0
	$$ 
	from the definition of $\psi$. From Cauchy-Schwarz inequality we immediately obtain that 
	$$
	|h_1(s)|\leq C e^{2\alpha s} \sqrt{\chi_1(s)} , \quad s\to +\infty.
	$$
By setting
$$
\zeta_1 = \sqrt{\chi_1}
$$
one can find that $\zeta_1$ satisfies
$$
-\zeta_1'' + \lambda_1 \zeta_1 - 2 \sqrt{\Lambda} \zeta_1' \leq \frac{h_1}{2 \zeta_1} \leq C e^{-2\alpha s} \,.
$$
Under the condition $\alpha \leq \alpha_{FS}$, one can prove that $\zeta_1(s) \leq O(e^{-\alpha s}) $ as $s \to +\infty$ and then $\chi_1(s)\leq O(e^{-2\alpha s})$ for $s\to +\infty$, which together with \eqref{dim_i} and \eqref{chi1def}, proves (i). 

{\it Proof of} (ii) and (iii). Again, the proofs of (ii) and (iii) closely follow \cite[Proof of Proposition 8.2]{DEL}. As done for (i), the only difference is in the right hand side of the differential equations, which comes from the right hand side of \eqref{eq_phi3}. However, only the asymptotic estimates of the right hand side are used in the proof and, thanks to \eqref{Phi_bounded}, the asymptotic behaviour of the terms on the right hand side that we obtain is the same as the one in \cite{DEL}. For this reason we omit the rest of the proof.

The proof of the proposition now follows from \eqref{st1}, \eqref{st2}, \eqref{st4} and 	
\begin{equation*}
	\left |  \nabla_\omega v'(r,\omega)\right |\leq  \left | \nabla_\omega v'(r,\omega)-\frac{1}{r} \nabla_\omega v(r,\omega)\right |+\left |\frac{1}{r} \nabla_\omega v(r,\omega)\right |.
	\end{equation*}
by using the estimates (i)-(ii)-(iii).
\end{proof}

\section*{Acknowledgements}
The authors are indebted to Luigi Vezzoni for the discussions they had together. 

The authors have been partially supported by the ``Gruppo Nazionale per l'Analisi Matematica, la Probabilit\`a e le loro Applicazioni'' (GNAMPA) of the ``Istituto Nazionale di Alta Matematica'' (INdAM, Italy).  A.R. has been partially supported by the PRIN 2017 project ``Direct and inverse problems for partial differential equations: theoretical aspects and applications''.

\end{document}